\DeclareMathAlphabet{\mathpzc}{OT1}{pzc}{m}{it}
\begin{document}
\hfuzz5pt
\theoremstyle{plain}
\newtheorem{theorem}{\textbf{Theorem}}[section]
\newtheorem{lemma}[theorem]{\textbf{Lemma}}
\newtheorem{proposition}[theorem]{\textbf{Proposition}}
\newtheorem{corollary}[theorem]{\textbf{Corollary}}
\newtheorem{claim}[theorem]{\textbf{Claim}}
\newtheorem{addendum}[theorem]{\textbf{Addendum}}
\newtheorem{definition}[theorem]{\textbf{Definition}}
\newtheorem{remark}[theorem]{\textbf{Remark}}
\newtheorem{example}[theorem]{\textbf{Example}}
\newtheorem{conjecture}[theorem]{\textbf{Conjecture}}
\newtheorem{notation}[theorem]{\textbf{Notation}}
\renewcommand{\baselinestretch}{1.50} % for the line spacing.

\pagenumbering{arabic} \baselineskip 10pt
\newcommand{\disp}{\displaystyle}
\thispagestyle{empty}

\title{On Tempered Ultradistributions in Classical Sobolev Spaces}
	
\author{A. U. Amaonyeiro$^{1}$ and M.E. Egwe$^{2*}$ \\Department of Mathematics, University, Ibadan, Nigeria\\anslemamaonyeiro@uam.edu.ng\;\;
murphy.egwe@ui.edu.ng\\ * Corresponding author.}	
\maketitle		
% xou can add more author(s) by adding command \Author{arg1}{arg2}{arg3}	

\noindent\rule{\textwidth}{1pt}

%\blfootnote{\acceptance{****}{****}}\vspace{-.75 cm}

\begin{abstract}{\noindent  We construct and investigate  the properties of tempered ultradistribution spaces in Sobolev spaces. A new Sobolev space preserving the original properties and condition whose derivatives are linear continuous operators embedding in $L^p$ for $1\leq p\leq \infty$ is characterized. Moreover, we also consider some Sobolev embedding theorems involving rapidly decreasing functions, and finally, we prove the extension of Rellich's compactness theorem.}\end{abstract}
\ \\
\textbf{keywords:}Tempered ultradistributions, Classical Sobolev spaces, Embedding, Compactness\\

\textbf{Mathematics Subject Classification(2020):} 46F10, 46E10, 46F05, 46F20, 46F25\\

\section{Introduction}\label{sec1}
\noindent We conduct a survey on the extension of a special type of generalized functions called the class of tempered ultradistributions in the classical setting of Sobolev spaces.\ \\
The notion of classical Sobolev spaces $\mathcal{W}^{l,p}$ was introduced by S. L. Sobolev in the 1930s basically developed for the framework of modern theory of partial differential equations. The development of these spaces has a foundation in the theory of generalized functions. For details about the spaces, refer to \cite{friedman}, \cite{narici}  and \cite{leoni}. This theory was facilitated by the existence of embedding between many of the function spaces like the $L^p$-spaces.\ \\
\noindent In \cite{leoni} one sees the space $\mathcal{W}^{l,p}$ with constant exponent $p$ as the space of all functions in $L^p$ whose distributional partial derivatives belong to $L^p$, that is, for all $j\in\mathbb{N}^n$ there are $\psi_{j}\in L^{p}$ such that
\[\int_{\Omega}\mu\frac{\partial^{\alpha}\eta}{\partial y_{j}^{\alpha}}dy=(-1)^{\vert\alpha\vert}\int_{\Omega}\psi_{j}\eta dy\quad\forall\quad \eta\in C_{c}^{\infty}\quad\text{for}\quad p\in [1,\infty]\]
In this regard, $\psi_j$ is viewed as the weak derivative or the distributional derivative of $\mu\in L^p$. The work on Sobolev space was extended using the defining functions as tempered distributions in \cite{pahk}. In the work, it is defined that a tempered distribution $f\in \mathcal{S}$ belongs to the Sobolev space $\mathcal{W}^{l,p}\equiv \mathcal{W}^{l,p}(\mathbb{R}^{n})$, $l\in\mathbb{R}$ if $\hat{\mu}(x)$ is a function such that the following condition is satisfied:
\begin{equation}
\label{eqnintro}
\int\vert\hat{\mu}(x)\vert^{2}(1+\vert x\vert^{2})^{l}dx<\infty
\end{equation}
The notion of generalized functions bridge the relationship between many function spaces and harmonic analysis. For instance, this theory provides the preservation of properties of special class of ultradistributions named the tempered ultradistributions and some related function space. For details on tempered ultradistributions, see references Friedman \cite{friedman}, Silva \cite{silva} and Roumieu \cite{hormander}.\ \\
\noindent This paper is concerned with the inclusion of tempered ultradistributions in the classical setting of Sobolev spaces and see the behavioral properties of the underlying functions. The aim can be viewed as taking fundamental Sobolev spaces of infinitely ultradifferentiable tempered ultradistributions with slow growth conditions whose derivatives belong to the $L^p$-spaces.\ \\
\noindent This paper is organized as follows: section 2 gives some detailed information about the classical Sobolev spaces and tempered ultradistributions, and present some auxiliary statements which play significant roles in the proof of the main results. In section 3, we present the inclusion of tempered ultradistributions in Sobolev spaces with their underlying structure and properties giving emphasis on the exponent. Finally, in section 4, some main results are presented in form of embeddings between the function spaces under considerations, and we present the extension of the Rellich's compactness theorem involving the newly defined classical Sobolev spaces.

\section{Preliminaries}
\subsection{On Classical Sobolev Space}
Throughout this paper, let $\Omega$ denotes a non-empty open domain $n$-dimensional space basically for the differentiability and integrability of functions. We assume that $\Omega$ is an open subset of $\mathbb{C}^n$, the complex plane. We introduce some of the notations to be used in this work. Basically we outline below some concepts of the theory of topological vector spaces with its important roles in the study of temperd ultradistributions and Sobolev spaces. For detailed discussion of these topics, refer to \cite{leoni} and \cite{silva}.
\begin{notation}
Denote by $\text{supp}(\mu)$ the support of functions $\mu$; $\mathcal{D}(\Omega)$ or $C_{0}^{\infty}$ the space of all $C^\infty$-functions with compact support in $\Omega$; the function spaces $C^{l}(\Omega)$, $C^{l}(\bar{\Omega})$ and $C_{0}^{l}$ of all functions with derivatives of order $l$; and the classes $C^{l,\beta}(\Omega)$, $C^{l,\beta}(\bar{\Omega})$ and $C_{0}^{l,\beta}(\Omega)$ of all functions with derivatives of order $l$ satisfying H$\ddot{\text{o}}$lder inequality of exponent $\beta\in (0,1]$.\ \\
Let $\beta\in\mathbb{N}^n$ be a multi-index such that $\displaystyle\vert\beta\vert=\beta_{1}+\cdots+\beta_n$, $\beta!=(\beta_{1}!,\cdots,\beta_{n}!)$, $D^{\beta}=D_{\xi_{1}}^{\beta_1},\cdots,D_{\xi_{n}}^{\beta_n}$, where $\displaystyle D_{\xi_{i}}=\frac{\partial}{\partial\xi_{i}}$, and $\displaystyle \xi^{\beta}=\Big(\xi_{i},\cdots,\xi_{n}\Big)^{(\beta_{1},\cdots,\beta_{n})}=\xi^{\beta_{1}},\cdots,\xi^{\beta_n}$.
\end{notation}

\begin{definition}
\label{defdistri}
We define the space $\mathcal{D}'(\Omega)$ as a dual space of $\mathcal{D}$ consisting of distributions according to L. Schwartz \cite{schwartz}. The space, denoted by $L^{p}_{\text{loc}}(\Omega)$, is defined as a space of all $p$th locally integrable functions $\mu$ such that the following condition is satisfied:
\[\int_{\Omega}\vert\mu\vert^{p}d\xi<\infty.\]
\end{definition}
There are functions $\eta\in C^{1}(\mathbb{R}^{n})\setminus C^{2}(\mathbb{R}^{n})$. We present the $\beta$th partial derivative of $\mu\in L_{\text{loc}}^{1}(\Omega)$ as defined below:

\begin{definition}
\label{defpartialderiv}
Let $\mu\in L_{\text{loc}}^{1}(\Omega)$ and let $\beta\in\mathbb{N}^n$ be a multi-index. We define $\eta\in L_{\text{loc}}^{1}(\Omega)$ as the $\beta$the partial derivative of $\mu$, denoted $D^{\beta}\mu=\eta$, if the following condition is satisfied:
\[\int_{\Omega}\mu D^{\beta}\psi d\xi=(-1)^{\vert\beta\vert}\int_{\Omega}\eta\psi d\xi\quad\forall\quad \psi\in C_{0}^{\infty}(\Omega)\]
\end{definition}
From definition \ref{defpartialderiv}, we observed that for $\beta=0=(0,\cdots,0)$, we have $D^{0}\mu=D^{(0,\cdots,0)}\mu=\mu$. Similarly then we have that
\[D_{i}\mu=\frac{\partial\mu}{\partial\xi_{i}}=D^{(0,\cdots,1,\cdots,0)}\mu,\quad\text{for}\quad i\in\mathbb{N}^{n}\]
In particular $D\mu$ is termed the weak gradient of $\mu$.

\begin{remark}
\label{rema.e}
Weak derivatives are basically functions satisfying the condition of integration by parts, and change of of any function on a measure of zero has no effect on its weak derivatives. Recall that weak derivatives generalize the classical derivative, and that $D^{\beta}\mu=0$ almost everywhere also implies that $\mu\equiv 0$ almost everywhere.
\end{remark}
The following lemma will back up our claim in Remark \ref{rema.e}.

\begin{lemma} \cite{adams}
\label{lemweakderivative}
A weak $\alpha$th partial derivative of $\mu$, if it exists, is uniquely defined up to a set of measure zero.
\end{lemma}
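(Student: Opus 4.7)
The plan is to use the standard strategy: assume two different $L^1_{\text{loc}}$ functions both satisfy the integration-by-parts identity of Definition~2.3, subtract these identities, and then conclude that the difference must vanish almost everywhere by invoking the fundamental lemma of the calculus of variations (du~Bois--Reymond lemma).

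More concretely, suppose $\eta_1,\eta_2\in L^1_{\text{loc}}(\Omega)$ are both weak $\alpha$th partial derivatives of $\mu$. By Definition~\ref{defpartialderiv}, for every test function $\psi\in C_0^\infty(\Omega)$ one has
\[
\int_\Omega \mu\, D^\alpha\psi\, d\xi=(-1)^{|\alpha|}\int_\Omega \eta_1\psi\, d\xi=(-1)^{|\alpha|}\int_\Omega \eta_2\psi\, d\xi.
\]
Subtracting the two right-hand identities eliminates $\mu$ and yields
\[
\int_\Omega (\eta_1-\eta_2)\,\psi\, d\xi=0\qquad\forall\,\psi\in C_0^\infty(\Omega).
\]
So the entire problem reduces to the assertion: if $f:=\eta_1-\eta_2\in L^1_{\text{loc}}(\Omega)$ satisfies $\int_\Omega f\psi\, d\xi=0$ for all $\psi\in C_0^\infty(\Omega)$, then $f=0$ almost everywhere in $\Omega$.

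To establish this lemma (which is the one genuine step of substance here), I would proceed by a mollification/density argument. Fix a relatively compact open subset $U\Subset\Omega$ and a standard non-negative mollifier $\rho_\varepsilon\in C_0^\infty(\mathbb{R}^n)$ with $\int\rho_\varepsilon=1$. For $x\in U$ and $\varepsilon$ small enough, the translate $\psi(\cdot)=\rho_\varepsilon(x-\cdot)$ lies in $C_0^\infty(\Omega)$, so the hypothesis forces the mollification $f*\rho_\varepsilon(x)=0$ for every such $x$. Since $f\in L^1_{\text{loc}}$, classical mollifier theory gives $f*\rho_\varepsilon\to f$ in $L^1(U)$ as $\varepsilon\to 0$, whence $f=0$ a.e.\ on $U$. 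As $U$ was an arbitrary relatively compact subset and $\Omega$ is exhausted by such sets, $f=0$ almost everywhere on $\Omega$, i.e.\ $\eta_1=\eta_2$ a.e.

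The only step that is not pure bookkeeping is the fundamental lemma invoked above; everything else is a direct manipulation of the defining identity. I do not anticipate a genuine obstacle, since both the mollifier convergence and the density of $C_0^\infty(\Omega)$ in the appropriate sense are standard results for $L^1_{\text{loc}}$ functions on open subsets of $\mathbb{R}^n$ (here identified with $\mathbb{C}^n$ via the usual real structure). The result is therefore essentially a one-line consequence of Definition~\ref{defpartialderiv} together with du~Bois--Reymond.
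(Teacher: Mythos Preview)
Your argument is correct and is precisely the standard proof one finds in the cited reference \cite{adams}: assume two candidate weak derivatives, subtract the defining identities to obtain $\int_\Omega(\eta_1-\eta_2)\psi\,d\xi=0$ for all $\psi\in C_0^\infty(\Omega)$, and conclude $\eta_1=\eta_2$ a.e.\ via the fundamental lemma of the calculus of variations (or equivalently, mollification). Note that the paper itself does not supply a proof of this lemma; it is stated with attribution to \cite{adams} and used as background, so there is no alternative in-paper argument to compare against.
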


We now present the definition of classical Sobolev spaces below.

\begin{definition}
\label{def1}
Let $\Omega$ be an open subset of $\mathbb{R}^n$, and let $\beta$ be a multi-index such that $\vert\beta\vert\leq l$ for $l\in\mathbb{R}$. We define the classical Sobolev space, denoted  $\mathcal{W}^{l,p}(\Omega)$, as a function space consisting of functions $\mu\in L^{p}(\Omega)$ such that $D^{\beta}\mu\in L^{p}(\Omega)$ provided the weak derivative $D^{\beta}\mu$ exists. That is, we have
\begin{equation}
\label{eqn1.0}
\mathcal{W}^{l,p}(\Omega)\equiv\Big\{\mu\in L^{p}(\Omega):D^{\beta}\mu\in L^{p}(\Omega),\quad\vert\beta\vert\leq l\Big\}
\end{equation}
\end{definition}
The defining norm for $\mathcal{W}^{l,p}$ is given by
\begin{equation}
\label{eqn1.1}
\Vert\mu\Vert_{\mathcal{W}^{l,p}(\Omega)}=\Big(\sum_{\vert\beta\vert\leq l}\int_{\Omega}\vert D^{\beta}\mu(\xi)\vert^{p}d\xi\Big)^{\frac{1}{p}},\quad \mu\in \mathcal{W}^{l,p}(\Omega),\quad 1\leq p<\infty
\end{equation}
and for $p=\infty$, its norm is given as
\begin{equation}
\label{eqn1.2}
\Vert\mu\Vert_{\mathcal{W}^{l,\infty}(\Omega)}=\sum_{\vert\beta\vert\leq l}\Vert D^{\beta}\mu\Vert_{L^{\infty}(\Omega)}=\sum_{\vert\beta\vert\leq k}\underset{\Omega}{\text{esssup}}\vert D^{\beta}\mu\vert\quad \mu\in \mathcal{W}^{l,\infty}(\Omega)
\end{equation}
where  we can define the norm as $\displaystyle \Vert D^{\beta}\mu\Vert_{L^{\infty}(\Omega)}=\underset{\Omega}{\text{esssup}}\vert D^{\beta}\mu\vert$.\\
In summary the Sobolev space $\mathcal{W}^{l,p}(\Omega)$ is a function space consisting of functions that possesses some properties of distributional derivatives up to order $l$ and belongs to $L^p$-space. The functions in $\mathcal{W}^{l,p}$ are equal almost everywhere in $L^{p}$. We identify different ways of describing norms for the classical Sobolev spaces. For more details in different norms on $\mathcal{W}^{l,p}$ for $1\leq p<\infty$ and $l=1,2,\cdots$ etc we refer, for instance to \cite{leoni}.
\begin{example}
\label{exunbounded}
Given $n\geq 2$ and $\mu:B(0,1)\longrightarrow\mathbb{R}^{+}=[0,\infty]$ and $\mu(\xi)=\vert \xi\vert^{-\beta},\quad \beta>0$. Then we have that $\mu$ is not bounded within any neighborhood of the origin. That is, $\mu\in C^{\infty}(B(0,1)\setminus\lbrace 0\rbrace)$, and it also means that $\mu$ is not $p$th locally integrable. This also implies that some functions in $\mathcal{W}^{1,p},\quad 1\leq p<n,\quad n\geq 2$, may not be bounded in every open subset.
\end{example}

\begin{example}
However the same function defined in Example \ref{exunbounded} does not belong to $\mathcal{W}^{1,n}\Big(B(0,1)\Big)$, and this brings some unbounded functions in $\mathcal{W}^{1,n}$ for $n\geq 2$. However, we define the following function $\mu:B(0,1)\longrightarrow\mathbb{R}$ by
\[\mu(\xi)=\begin{cases}
\log\Big(\log(1+\frac{1}{\vert \xi\vert})\Big), & \xi\neq 0\\
   0, & \xi=0
\end{cases}
\]
It shows that $\mu\in \mathcal{W}^{1,n}\Big(B(0,1)\Big)$ when $n\geq 2$, but $\mu\notin L^{\infty}\Big(B(0,1)\Big)$.
\end{example}

The space $L^\infty$ can be used to construct a function in $\mathcal{W}^{1,n}\Big(B(0,1)\Big)$ that is unbounded in every open subset of $B(0,1)$, that is, functions unbounded within the neighborhood of zero. Thus functions defined in $\mathcal{W}^{1,p},\quad 1\leq p\leq n,\quad n\geq 2$ are not continuous. Every function defined in $\mathcal{W}^{1,p}$ with $p>n$ coincide with a continuous function almost everywhere. Also the function $\eta(0,1)\longrightarrow\mathbb{R}$, defined by
\[\eta(\xi)=\eta(\xi_{1},\cdots,\xi_{n})=\begin{cases}
1, & \xi_{n}\geq 0\\
0,  & \xi_{n}<0
\end{cases}
\]
is a member of $\mathcal{W}^{1,p}\Big(B(0,1)\Big)$ for any $1\leq p\leq \infty$.

\subsection{On Some Properties of Sobolev Spaces}
We assume that the domain under consideration is smooth. In this section we are more concerned with the integral representation of functions in the classical Sobolev space $\mathcal{W}^{l,p}$ which vanish on the boundary of $\Omega$, and also this representation entails the properties of the space.  We refer for instance to \cite{leoni} for the properties of classical Sobolev spaces. \ \\
Thus we present general properties of the classical Sobolev spaces involving weak derivatives below.
\begin{lemma}\cite{adams}
\label{lem1.1}
Assume that $\mu,\nu\in \mathcal{W}^{l,p}(\Omega)$ and $1\leq \vert\beta\vert\leq l$. Then we have the following:
\begin{itemize}
\item[(i)] $\displaystyle D^{\beta}\mu\in \mathcal{W}^{l-\vert\beta\vert,p}(\Omega)$
\item[(ii)] $\displaystyle D^{\beta}(D^{\alpha}\mu)=D^{\alpha}(D^{\beta}\mu)$ for all multi-indices $\alpha,\beta$ with $\vert\beta\vert\leq l$
\item[(iii)] $\displaystyle \forall\quad\lambda,t\in \mathbb{R},\quad \lambda\mu+t\nu\in \mathcal{W}^{l,p}(\Omega)$ and
\[D^{\beta}(\lambda\mu+t\nu)=\lambda D^{\beta}\mu+t D^{\beta}\nu,\]
\item[(iv)] If $\Omega'\subset \Omega$ is open, then $\mu\in \mathcal{W}^{l,p}(\Omega')$,
\item[(v)] If $\eta\in C^{\infty}_{0}(\Omega)$, then $\displaystyle \eta,\mu\in \mathcal{W}^{l,p}(\Omega)$ and
\[D^{\alpha}(\eta\mu)=\sum_{\beta\leq \alpha}\binom{\alpha}{\beta}D^{\beta}\eta D^{\alpha-\beta}\mu\]
where
\[\binom{\alpha}{\beta}=\frac{\alpha!}{\beta!(\alpha-\beta!)},\quad \alpha!=\alpha_{1}!,\cdots,\alpha_{n}!\]
and $\beta\leq \alpha$ means that $\beta_{i}\leq \alpha_{i}$ $\forall\quad i=1,2,\cdots,n$.
\end{itemize}
\end{lemma}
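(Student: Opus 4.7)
The plan is to verify each of the five assertions by returning to Definition \ref{defpartialderiv}, i.e., the integration-by-parts identity against test functions $\psi\in C_0^\infty(\Omega)$, and to use Lemma \ref{lemweakderivative} whenever one needs the resulting weak derivative to be uniquely determined.

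For (i), I would fix a multi-index $\gamma$ with $|\gamma|\le l-|\beta|$, set $\alpha=\beta+\gamma$, and observe that since $\psi\in C_0^\infty(\Omega)$ implies $D^\gamma\psi\in C_0^\infty(\Omega)$, the identity
\[
\int_\Omega (D^{\beta}\mu)\,D^\gamma\psi\,d\xi = (-1)^{|\beta|}\int_\Omega \mu\,D^{\alpha}\psi\,d\xi
= (-1)^{|\beta|}(-1)^{|\alpha|}\int_\Omega D^{\alpha}\mu\,\psi\,d\xi
\]
gives $D^\gamma(D^\beta\mu)=D^{\alpha}\mu\in L^p(\Omega)$, because $|\alpha|\le l$. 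Hence $D^\beta\mu\in\mathcal{W}^{l-|\beta|,p}(\Omega)$. Statement (ii) falls out of the same computation: both $D^\beta(D^\alpha\mu)$ and $D^\alpha(D^\beta\mu)$ satisfy the test against $\psi$ that characterises $D^{\alpha+\beta}\mu$, so by uniqueness (Lemma \ref{lemweakderivative}) they coincide a.e.

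For (iii), linearity of the Lebesgue integral gives
\[
\int_\Omega (\lambda\mu+t\nu)\,D^\beta\psi\,d\xi
=\lambda\!\int_\Omega \mu\,D^\beta\psi\,d\xi + t\!\int_\Omega \nu\,D^\beta\psi\,d\xi
=(-1)^{|\beta|}\!\int_\Omega(\lambda D^\beta\mu+tD^\beta\nu)\psi\,d\xi,
\]
which identifies the weak derivative and shows $\lambda\mu+t\nu\in\mathcal{W}^{l,p}(\Omega)$ via $L^p$-linearity. For (iv), every $\psi\in C_0^\infty(\Omega')$ extends by zero to a member of $C_0^\infty(\Omega)$, so the defining identity on $\Omega$ restricts to one on $\Omega'$, and the $L^p$ norms only decrease.

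The main work lies in (v), the Leibniz formula, which I would prove by induction on $|\alpha|$. For the base case $|\alpha|=1$, say $\alpha=e_i$, pick any $\psi\in C_0^\infty(\Omega)$ and write $\eta\psi\in C_0^\infty(\Omega)$; then
\[
\int_\Omega \eta\mu\,D_i\psi\,d\xi
=\int_\Omega \mu\bigl(D_i(\eta\psi)-(D_i\eta)\psi\bigr)d\xi
=-\!\int_\Omega(\eta D_i\mu+(D_i\eta)\mu)\psi\,d\xi,
\]
so $D_i(\eta\mu)=\eta D_i\mu+(D_i\eta)\mu$ in the weak sense, which is the $|\alpha|=1$ Leibniz formula. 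For the inductive step, assume the formula holds for all multi-indices of length $<|\alpha|$, write $\alpha=\alpha'+e_i$, apply the inductive hypothesis to $D^{\alpha'}(\eta\mu)$, then differentiate once more using the $|\alpha|=1$ case on each summand (permissible because, by (i) applied to $\mu$ and to $\eta\in C_0^\infty(\Omega)\subset\mathcal{W}^{l,p}(\Omega)$, the resulting terms lie in suitable Sobolev spaces), and finally reindex using the Pascal identity $\binom{\alpha'}{\beta}+\binom{\alpha'}{\beta-e_i}=\binom{\alpha}{\beta}$. The membership $\eta\mu\in\mathcal{W}^{l,p}(\Omega)$ then follows because each summand $D^\beta\eta\,D^{\alpha-\beta}\mu$ is an $L^\infty$ function times an $L^p$ function and hence lies in $L^p(\Omega)$. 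The delicate point in this last step is justifying the bookkeeping of the binomial coefficients cleanly; everything else is a direct unwinding of the definition.
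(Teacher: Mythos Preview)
Your argument is correct and is precisely the standard textbook proof (as found, e.g., in Adams--Fournier or Evans): each item is reduced to the integration-by-parts definition of weak derivatives, with uniqueness supplied by Lemma~\ref{lemweakderivative}, and the Leibniz rule (v) is handled by induction on $|\alpha|$ using the Pascal identity. Note, however, that the paper does not supply its own proof of this lemma at all; it is simply quoted from \cite{adams} as a known result, so there is nothing to compare against beyond observing that your write-up matches the source the authors cite.
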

\begin{remark}
The condition (v) of Lemma \ref{lem1.1} is called Leibnitz formula. Hence theories of weak derivatives and classical derivatives have the same properties.
\end{remark}
For further properties of the classical Sobolev spaces we refer, for instance to  \cite{leoni} and \cite{triebel}.\ \\
We now introduce a somewhat different class of generalized functions called tempered ultradistributions.

\subsection{On The Spaces $\mathcal{U}$ and $\mathcal{U}'$}
We introduce a special class of generalized functions called tempered ultradistributions defined on the spaces of rapidly decreasing ultradifferentiable functions. We denote by $\mathcal{U}'$ the strong dual of the space $\mathcal{U}$ and call its Fourier distributions of exponential type or tempered ultradistributions as seen by Silva \cite{silva}.\\
We introduce the following definitions.
\begin{definition}
\label{defultra}
We define the space $\mathcal{U}(\mathbb{C}^{n})$ as the function space of all rapidly decreasing ultradifferentiable functions $\mu\in\mathcal{U}\subset\mathcal{S}$ such that the condition below is satisfied:
\begin{equation}
\label{eqn1}
\Vert \mu\Vert_{p}=\sup_{\vert\text{Im}(\xi)\vert<p; \xi\in\mathbb{C}}\Big\{(1+\vert \xi\vert^{p})^{l}\vert \mu(\xi)\vert\Big\}<\infty\quad\forall\quad p\in\mathbb{N}\quad\text{and}\quad \vert\beta\vert\leq l,\quad l\in\mathbb{R}.
\end{equation}
\end{definition}
\ \\
The dual of the space $\mathcal{U}$ is the space of all tempered ultradistributions, denoted by $\mathcal{U}'$.
The continuous linear functional $\mu:\mathcal{U}\longrightarrow\mathbb{C}^{n}$ can be described as the Fourier transform of exponential distributions or as finite order derivatives of bounded functions with exponential growth. The spaces of tempered ultradistributions can be represented by analytical functions. Next we present the vanishing condition and compact supported tempered ultradistributions.

\begin{definition}
\label{defvanish}
A tempered ultradistribution $\mu\in\mathcal{U}'$ is said to vanish in an open subset $\Omega$ if $\displaystyle (1+\vert\xi\vert^{p})^{l}\mu(x+iy)-(1+\vert\xi\vert^{p})^{l}\mu(x-iy)\longrightarrow 0$ as $\vert\xi\vert\to 0$ and $y\to 0$.
\end{definition}

\begin{definition}
\label{defcompact}
A tempered ultradistribution $\mu\in\mathcal{U}'$ has a compact support if there is a representative function $\hat{\mu}$ vanishing at $\infty$. Equivalently, $\mu$ vanishes outside a compact support.
\end{definition}
For more details about the underlying topology, properties about the spaces $\mathcal{U}$, $\mathcal{U}'$, $\mathcal{D}$, $\mathcal{S}$ and $\mathcal{E}$ we refer, for instance to \cite{silva} and \cite{roumieu}.

\section{On Inclusion of Tempered Ultradistribution in Classical Setting of Sobolev Spaces}
We include tempered ultradistributions in the classical setting of Sobolev space framework. Some of the basic properties of both spaces have been introduced in the previous sections. We discuss when the two spaces of respective underlying defining functions agree. Intuitively tempered ultradistributions spaces agree with the classical Sobolev spaces if their respective linear functionals are continuous and if the differentiable or ultradifferentiable functions are dense in the spaces. In particular this property reflects the fact the continuity of the respective linear functionals defined on some different spaces of test functions or rapidly decreasing ultradifferentiable functions is built into the definition of the classical Sobolev space combined with the density of the defining functions. \ \\
We define the class of tempered ultradistributions in Sobolev space.

\begin{definition}
\label{def**}
Let $\Omega\subset \mathbb{C}^n$ be an open subset. Then the Sobolev space $W_{\mathcal{U}}^{l,p}(\Omega)$ is the space of all tempered ultradistributions $\mu\in\mathcal{U}'$ whose ultradistributional (generalized) gradient $\nabla\mu$ belongs to $L^{p}(\Omega;\mathbb{C}^{n})$ for $\vert\beta\vert\leq l$.
\end{definition}

We present the similar definition below
\begin{definition}
\label{def2.1}
Let $p\in[1,\infty)$, $l\geq 0$ integer and $\beta$ multi-index be given. Then the tempered ultradistribution $\mu\in\mathcal{U}'(\mathbb{C})$ belongs to $\mathcal{W}^{l,p}(\Omega)$, denoted $\mathcal{W}^{l,p}_{\mathcal{U}}$, if $\hat{\mu}$ is a function in $\mathcal{D}'$ and
\begin{equation}
\label{eqn2.1}
\Vert \mu\Vert_{\mathcal{W}^{l,p}_{\mathcal{U}}}=\Bigg[\int_{\mathbb{C}^{n}}(1+\vert\xi\vert^{p})^{l}\vert\hat{\mu}(\xi)\vert^{p}d\xi\Bigg]^{\frac{1}{p}}<\infty\quad\text{for}\quad\vert\beta\vert\leq l
\end{equation}
Similarly we have
\begin{equation}
\label{eqn2.1**}
\Vert \mu\Vert_{\mathcal{W}^{l,p}_{\mathcal{U}}}=\Bigg[\int_{\mathbb{C}^{n}}(1+\vert\xi\vert^{p})^{l}\vert\partial^{\beta}\hat{\mu}(\xi)\vert^{p}d\xi\Bigg]^{\frac{1}{p}}<\infty\quad\text{for}\quad\vert\beta\vert\leq l
\end{equation}

The classical tempered space $\displaystyle \mathcal{W}_{\mathcal{U}}^{l,p}(\Omega)$ can be represented by:
\[\mathcal{W}_{\mathcal{U}}^{l,p}(\Omega):=\Big\{\mu\in \mathcal{U}':(1+\vert\xi\vert^{p})^{l}D^{\beta}\mu\in L^{p}\quad\text{for}\quad\vert\beta\vert\leq l\Big\}\]
where $\mathcal{D}'$ is the space of distributions.
\end{definition}
We observed that the space $\mathcal{S}$ (the space of all tempered distributions) is contained in $\displaystyle \mathcal{W}_{\mathcal{U}}^{l,p}(\Omega)$. Then it follows that the space $\mathcal{U}$ is contained in $\displaystyle \mathcal{W}_{\mathcal{U}}^{l,p}(\Omega)$ for $\mathcal{U}\subset\mathcal{S}$. For $l=0$, then $\displaystyle \mathcal{W}_{\mathcal{U}}^{0,p}$ coincides with classical $L^{p}$.\ \\
We see that $\displaystyle \mathcal{W}_{\mathcal{U}}^{l,p}(\Omega)$ has a structure of Hilbert space as presented in Theorem \ref{thm2.4}.

\begin{theorem}
\label{thm2.4}
The space classical tempered $\displaystyle \mathcal{W}_{\mathcal{U}}^{l,p}(\Omega)$ exhibit the structure of Hilbert space with inner product defined by
\begin{equation}
\label{eqn2.1}
\langle\mu,\varphi\rangle_{l}=\int_{\Omega}(1+\vert\xi\vert^{p})^{l}\hat{\mu}(\xi)\overline{\hat{\nu}(\xi)}d\xi
\end{equation}
\end{theorem}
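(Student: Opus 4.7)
The plan is to read the theorem as asserting a Hilbert space structure for $p=2$, since the sesquilinear form in \eqref{eqn2.1} is only an inner product in that case; for other $p$ the identity $\langle\mu,\mu\rangle_{l}=\Vert\mu\Vert_{\mathcal{W}^{l,p}_{\mathcal{U}}}^{p}$ from \eqref{eqn2.1**} degenerates. So I would state the argument for $p=2$ and then verify the two things required: that $\langle\cdot,\cdot\rangle_{l}$ is a genuine inner product on $\mathcal{W}^{l,2}_{\mathcal{U}}(\Omega)$, and that the space is complete in the induced norm.

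First I would check well-definedness of the integral. For $\mu,\nu\in\mathcal{W}^{l,2}_{\mathcal{U}}(\Omega)$ the weights $(1+|\xi|^{2})^{l/2}\hat{\mu}$ and $(1+|\xi|^{2})^{l/2}\hat{\nu}$ lie in $L^{2}(\Omega)$ by Definition \ref{def2.1}, so Cauchy--Schwarz gives
\[\int_{\Omega}(1+|\xi|^{2})^{l}|\hat{\mu}(\xi)\overline{\hat{\nu}(\xi)}|\,d\xi\le \Vert\mu\Vert_{\mathcal{W}^{l,2}_{\mathcal{U}}}\Vert\nu\Vert_{\mathcal{W}^{l,2}_{\mathcal{U}}}<\infty.\]
The inner-product axioms (sesquilinearity and conjugate symmetry) are immediate from linearity of the Fourier transform and of the Lebesgue integral. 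For positive definiteness, $\langle\mu,\mu\rangle_{l}=0$ forces $(1+|\xi|^{2})^{l}|\hat{\mu}|^{2}=0$ a.e., hence $\hat{\mu}=0$ a.e.; by the uniqueness statement for weak derivatives (Lemma \ref{lemweakderivative}) combined with the injectivity of the Fourier transform on $\mathcal{U}'$ (recalled in Section 2.3), this gives $\mu=0$ in $\mathcal{U}'$.

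For completeness, I would take a Cauchy sequence $\{\mu_{n}\}\subset\mathcal{W}^{l,2}_{\mathcal{U}}(\Omega)$ and pass through the Fourier side: set $g_{n}(\xi):=(1+|\xi|^{2})^{l/2}\hat{\mu}_{n}(\xi)$. Then $\{g_{n}\}$ is Cauchy in $L^{2}(\Omega)$ by the very definition of $\Vert\cdot\Vert_{\mathcal{W}^{l,2}_{\mathcal{U}}}$, so by completeness of $L^{2}$ there is $g\in L^{2}(\Omega)$ with $g_{n}\to g$. Define the candidate limit via $\hat{\mu}(\xi):=(1+|\xi|^{2})^{-l/2}g(\xi)$ and let $\mu$ be its inverse Fourier transform as an element of $\mathcal{U}'$. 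A routine check then shows $\mu\in\mathcal{W}^{l,2}_{\mathcal{U}}(\Omega)$ with $\Vert\mu_{n}-\mu\Vert_{\mathcal{W}^{l,2}_{\mathcal{U}}}=\Vert g_{n}-g\Vert_{L^{2}(\Omega)}\to 0$.

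The main obstacle I anticipate is the last step: confirming that the function $\hat{\mu}=(1+|\xi|^{2})^{-l/2}g$ is indeed the Fourier transform of an element of $\mathcal{U}'$, i.e.\ that $\mu$ belongs to the correct ambient space of tempered ultradistributions rather than merely being a tempered distribution. This requires invoking the representation of $\mathcal{U}'$ discussed after Definition \ref{defultra} (finite-order derivatives of bounded functions of exponential type) together with the growth control provided by the weight $(1+|\xi|^{2})^{l}$; once the continuous pairing with test functions in $\mathcal{U}$ is verified, everything else reduces to the standard isometry with the weighted $L^{2}$ space.
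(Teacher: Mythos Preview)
Your proposal is correct and follows essentially the same route as the paper: verify the inner-product axioms (the paper declares this ``trivial''), then obtain completeness by transferring to the weighted space $L^{2}\big(\mathbb{C}^{n},(1+|\xi|^{p})^{l}\,d\xi\big)$ via the Fourier transform. The obstacle you anticipate---showing the limit $\hat{\mu}=(1+|\xi|^{2})^{-l/2}g$ arises from an element of $\mathcal{U}'$---is exactly what the paper dispatches in one clause by invoking that the Fourier transform is an isomorphism (``holomorphism'') of $\mathcal{U}'$; your more explicit argument via the growth bounds and the pairing with $\mathcal{U}$ simply unpacks that statement, and your observation that the Hilbert structure is only genuine for $p=2$ is a clarification the paper leaves implicit.
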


\begin{proof}
It is trivial that the defined inner product is an inner product on $\displaystyle \mathcal{W}_{\mathcal{U}}^{l,p}$ and since $\displaystyle L^{2}\Big(\mathbb{C}^{n},(1+\vert\xi\vert^{p})^{l}d\xi\Big)$ is complete and the Fourier transform of distributions of exponential type is a holomorphism of $\mathcal{U}'$, it follows that $\mathcal{W}_{\mathcal{U}}^{l,p}$ is Banach.
\end{proof}

We note that every ultradifferential operator $P$ with constant coefficient maps $\mathcal{W}^{l,p}_{\mathcal{U}}$ to $W^{l-\vert\beta\vert,p}_{\mathcal{U}}$ if the degree of the polynomial is less than or equal to $\vert\beta\vert$. We present the continuity of the ultradifferential operator in the classical Sobolev space as reflected in the result below:

\begin{theorem}
\label{thm2.7}
Let $\displaystyle P(D)=\sum_{\vert\beta\vert\leq l}b_{\beta}D^{\beta}$ be an ultradifferential operator of infinite order such that there are constants $M>0$ and $H>0$ for which the following condition holds:
\begin{equation}
\label{eqn2.2}
M=\sup\Bigg(\frac{\beta!\vert b_{\beta}\vert} {\Big(\frac{H}{\sqrt{H}}\Big)^{\vert \beta\vert}}\Bigg)\quad\forall\quad \beta\quad\text{and}\quad \vert\beta\vert\leq l
\end{equation}
If $\displaystyle\mu \in  \mathcal{W}_{\mathcal{U}}^{l,p}$ then $P$ belongs to $\mathcal{W}^{l-t,p}$. \\
Also, the map $\displaystyle P:\mathcal{W}^{l,p}_{\mathcal{U}}\longrightarrow \mathcal{W}^{l-t,p}_{\mathcal{U}}$ is continuous where $t$ represents the degree of the polynomial $P$.
\end{theorem}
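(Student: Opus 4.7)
The plan is to work on the Fourier side, since the $\mathcal{W}^{l,p}_{\mathcal{U}}$-norm in (\ref{eqn2.1}) is defined in terms of the representative $\hat\mu$. Under the Fourier transform, the ultradifferential operator $P(D)$ becomes multiplication by its symbol $P(i\xi)=\sum b_\beta (i\xi)^\beta$, so that $\widehat{P(D)\mu}(\xi)=P(i\xi)\hat\mu(\xi)$. The strategy is then to show that the symbol $P(i\xi)$ grows at most like a polynomial of degree $t$, and to use this pointwise bound to compare the weighted integral defining $\|P(D)\mu\|_{\mathcal{W}^{l-t,p}_{\mathcal{U}}}$ with the one defining $\|\mu\|_{\mathcal{W}^{l,p}_{\mathcal{U}}}$.

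The first main step is to extract a growth estimate for $P(i\xi)$ from the hypothesis. From (\ref{eqn2.2}) we get $|b_\beta|\le M(H/\sqrt{H})^{|\beta|}/\beta!$, and substituting into the symbol yields
\[
|P(i\xi)|\;\le\;M\sum_{|\beta|\le t}\frac{(H/\sqrt{H})^{|\beta|}}{\beta!}\,|\xi|^{|\beta|}\;\le\;C\,(1+|\xi|)^{t},
\]
for a constant $C=C(M,H,t,n)$, where $t$ is the degree of $P$. Raising to the $p$-th power and absorbing the polynomial equivalence between $(1+|\xi|)^{tp}$ and $(1+|\xi|^{p})^{t}$ into the constant gives
\[
|P(i\xi)|^{p}\;\le\;C'\,(1+|\xi|^{p})^{t}.
\]

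The second main step is to insert this estimate into the defining norm. For $\mu\in\mathcal{W}^{l,p}_{\mathcal{U}}$ I would compute
\[
\|P(D)\mu\|_{\mathcal{W}^{l-t,p}_{\mathcal{U}}}^{p}=\int_{\mathbb{C}^{n}}(1+|\xi|^{p})^{l-t}\,|P(i\xi)|^{p}\,|\hat\mu(\xi)|^{p}\,d\xi\;\le\;C'\int_{\mathbb{C}^{n}}(1+|\xi|^{p})^{l}\,|\hat\mu(\xi)|^{p}\,d\xi,
\]
which is $C'\|\mu\|_{\mathcal{W}^{l,p}_{\mathcal{U}}}^{p}$. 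This simultaneously gives the inclusion $P(D)\mu\in\mathcal{W}^{l-t,p}_{\mathcal{U}}$ and the explicit operator-norm bound $\|P(D)\mu\|\le (C')^{1/p}\|\mu\|$. Since $P(D)$ is linear by Lemma \ref{lem1.1}(iii), boundedness plus linearity yields continuity of $P:\mathcal{W}^{l,p}_{\mathcal{U}}\to\mathcal{W}^{l-t,p}_{\mathcal{U}}$.

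The delicate point, which I expect to be the main obstacle, is the well-posedness of $P(D)\mu$ as an element of $\mathcal{U}'$ when $P$ is genuinely of infinite order: one must justify that the series $\sum b_\beta D^\beta\mu$ converges in $\mathcal{U}'$ before its Fourier transform can be written as $P(i\xi)\hat\mu(\xi)$. Here the coefficient condition (\ref{eqn2.2}) is exactly what is needed: the factorial denominator $\beta!$ together with the geometric factor $(H/\sqrt{H})^{|\beta|}$ matches the seminorm structure of $\mathcal{U}$ in Definition \ref{defultra}, so testing against any $\varphi\in\mathcal{U}$ gives absolute convergence of $\sum b_\beta\langle D^\beta\mu,\varphi\rangle$. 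Once this convergence is in place, termwise Fourier transformation is legitimate, the symbolic calculation above is rigorous, and the two assertions of the theorem follow from the single estimate derived in the second step.
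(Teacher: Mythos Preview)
Your proposal is correct and follows essentially the same route as the paper: extract the coefficient bound $|b_\beta|\le M(H/\sqrt{H})^{|\beta|}/\beta!$ from (\ref{eqn2.2}), deduce a polynomial growth estimate on the symbol $|P(\xi)|\le C(1+|\xi|^{p})^{t}$, and let the result follow from the definition of the $\mathcal{W}^{l,p}_{\mathcal{U}}$-norm. Your write-up is in fact more detailed than the paper's, which stops at the symbol estimate and says ``hence the result follows'' without spelling out the norm comparison or the convergence issue for the infinite-order series; your extra paragraph on the well-posedness of $\sum b_\beta D^\beta\mu$ in $\mathcal{U}'$ is a genuine addition rather than a deviation.
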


\begin{proof}
We prove that $\displaystyle \vert b_{\beta}\vert\leq M\frac{\Big(\frac{H}{\sqrt{H}}\Big)^{\vert \beta\vert}}{\beta!}\quad\forall\quad \beta\quad\text{and}\quad \vert\beta\vert\leq l$, and the necessary and sufficient condition of (\ref{eqn2.2}) for the proof of the result to hold is only when
\[\vert P(\xi)\vert\leq M(1+\vert\xi\vert^{p})^{l}\]
for all $\xi\in\mathbb{C}^n$. Hence the result follows.
\end{proof}

We give an example of an ultradifferential operator of infinite order of rapid decrease as contained below.

\begin{lemma}
\label{lem2.8}
Let $l$ be a non-negative integer, $\xi\in\mathbb{C}^n$, and let $(1+\vert\xi\vert^{p})^{l}$ be an ultrapolynomial of degree $l$, for $1\leq p<\infty$. The map $\displaystyle \varphi\mapsto (1+\vert\xi\vert^{p})^{l}\vert\varphi\vert$ is an isomorphism from the space $\mathcal{W}^{l,p}_{\mathcal{U}}$ onto $\mathcal{W}_{\mathcal{U}}^{l-\vert\beta\vert,p}$ for $\vert\beta\vert\leq l$.
\end{lemma}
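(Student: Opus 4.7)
The plan is to treat the stated map as the multiplication operator $T:\varphi\mapsto \omega\varphi$ by the ultrapolynomial weight $\omega(\xi):=(1+|\xi|^p)^{l}$ (reading the absolute-value bars in the statement as the modulus taken inside the defining norm (\ref{eqn2.1}) rather than as a nonlinear step), and then verifying the three ingredients of a topological isomorphism: well-definedness of $T$ into the target, continuity, and the existence of a continuous two-sided inverse.

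First I would check that $T$ sends $\mathcal{W}^{l,p}_{\mathcal{U}}$ into $\mathcal{W}^{l-|\beta|,p}_{\mathcal{U}}$. Applying the Leibniz formula of Lemma \ref{lem1.1}(v) to $D^{\alpha}(\omega\varphi)$ for $|\alpha|\leq l-|\beta|$, each multi-index derivative distributes onto $\omega$ and onto $\varphi$. Since $\omega$ is an ultrapolynomial of degree $l$, every $D^{\gamma}\omega$ is majorised by a constant multiple of $(1+|\xi|^p)^{l}$, while the remaining factor $D^{\alpha-\gamma}\varphi$ is controlled by the norm $\|\varphi\|_{\mathcal{W}^{l,p}_{\mathcal{U}}}$ because $|\alpha-\gamma|+|\beta|\le l$. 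Inserting these estimates into (\ref{eqn2.1}) and summing over the finite Leibniz expansion yields a bound
\begin{equation*}
\|T\varphi\|_{\mathcal{W}^{l-|\beta|,p}_{\mathcal{U}}}\le C\,\|\varphi\|_{\mathcal{W}^{l,p}_{\mathcal{U}}},
\end{equation*}
with $C$ depending only on $l$, $|\beta|$ and $p$. This simultaneously gives well-definedness and continuity of $T$.

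For the inverse I would exploit that $\omega(\xi)\ge 1$ everywhere, so $\omega^{-1}(\xi)=(1+|\xi|^p)^{-l}$ is a bounded smooth function whose derivatives decay. Set $S:\psi\mapsto\omega^{-1}\psi$ and repeat the Leibniz estimate symmetrically; here the weight gained on the integral side of (\ref{eqn2.1}) is exactly compensated by the loss of $\omega^{-1}$. Verification that $TS\psi=\psi$ and $ST\varphi=\varphi$ pointwise shows $T$ is bijective, and the explicit bound for $S$ (or alternatively the open mapping theorem applied in the Banach-space setting of Theorem \ref{thm2.4}) produces bicontinuity.

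The hard part will be justifying that multiplication by $\omega$ in fact preserves the class $\mathcal{U}'$, so that $T\varphi$ still lies in the domain of the norm (\ref{eqn2.1}) and not merely in some larger dual. For this I expect to invoke the representation of $\mathcal{U}'$ as the Fourier image of exponential distributions, mentioned after Definition \ref{defultra}: ultrapolynomial multipliers are precisely the Fourier counterparts of the constant-coefficient ultradifferential operators governed by Theorem \ref{thm2.7}, and the pointwise bound $|P(\xi)|\le M(1+|\xi|^p)^{l}$ used there supplies exactly the domination needed to keep $\omega\varphi$ inside $\mathcal{U}'$. Once that invariance is secured, the norm equivalence and therefore the isomorphism claim follow routinely.
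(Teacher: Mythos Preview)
Your route and the paper's diverge. The paper does not separate the forward and inverse directions: it writes down a single algebraic identity, equation (\ref{eqn3.1}), asserting that the weighted–derivative expression $(1+|\xi|^p)^{l}\bigl((1+|\xi|^p)^{-l}D\bigr)^{\cdots}$ applied to $(1+|\xi|^p)^{m}\varphi$ is \emph{equal} (in modulus) to a corresponding expression applied to $\varphi$ with shifted indices; it then rescales by an operator factor to obtain (\ref{eqn3.2}), which is read as a direct norm identification between the source and target spaces. In other words, the paper's argument is a one-line change of variables in the seminorm family rather than a bound--plus--open-mapping argument.

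Your plan, by contrast, bounds $T$ via the Leibniz expansion and then treats $S=\omega^{-1}\cdot$ separately. Two points deserve care. First, the norm you cite, (\ref{eqn2.1}), lives on the Fourier side and contains no spatial derivatives of $\varphi$; the Leibniz estimate on $D^{\alpha}(\omega\varphi)$ is therefore not the object being measured there. If you mean to work with the alternative description $(1+|\xi|^p)^{l}D^{\beta}\mu\in L^{p}$ given just below (\ref{eqn2.1**}), say so explicitly, since the two are not interchangeable without an equivalence result you have not invoked. Second, and more seriously, the ``symmetric'' Leibniz step for $S$ does not close: to place $\omega^{-1}\psi$ in $\mathcal{W}^{l,p}_{\mathcal{U}}$ you would need control of derivatives of order up to $l$, yet $\psi\in\mathcal{W}^{l-|\beta|,p}_{\mathcal{U}}$ supplies only $l-|\beta|$ of them, and the boundedness and decay of $\omega^{-1}$ cannot manufacture the missing $|\beta|$ orders. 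The paper's identity avoids this precisely because it is an equality rather than a pair of one-sided estimates. If you wish to keep your multiplication-operator framework, you should instead show directly that $\|\omega\varphi\|_{l-|\beta|}$ and $\|\varphi\|_{l}$ are comparable by manipulating the weight inside the defining integral (\ref{eqn2.1}), which is closer in spirit to what the paper does.
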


\begin{proof}
Let $\varphi\in \mathcal{W}_{\mathcal{U}}^{l,p}$ such that $\vert\beta\vert\leq l$ and for $\leq p<\infty$, we get
\begin{equation}
\label{eqn3.1}
\Big\vert (1+\vert\xi\vert^{p})^{l}((1+\vert\xi\vert^{p})^{-l}D)^{\beta_{1}}(1+\vert\xi\vert^{p})^{t-(u+m)-1}\Big((1+\vert\xi\vert^{p})^{m}\varphi(\xi)\Big)\Big\vert=\Big\vert (1+\vert\xi\vert^{p})^{l}((1+\vert\xi\vert^{p})^{-1}D)^{t-u-m}\varphi(\xi)\Big\vert
\end{equation}
Since $\displaystyle \sup_{\xi\in\mathbb{C}^{n}}\Big\{  (1+\vert\xi\vert^{p})^{l} \vert\varphi(\xi)\vert\Big\}<\infty$ then multiplying (\ref{eqn3.1}) by the operator $[(B+\beta)b_{i}]^{-1}$ with supremum $\forall$ $\xi\in (0,n)$ for $n>1$, we obtain
\begin{equation}
\label{eqn3.2}
\partial^{t,u+m}_{l,\alpha}\Big[(1+\vert\xi\vert^{p})^{l}\varphi(\xi)\Big]=\partial^{t,u}_{l_{1},\alpha}(\varphi)
\end{equation}
Hence the result.
\end{proof}

The relationship between $\mathcal{W}_{\mathcal{U}}^{l,p}$ and $\mathcal{W}_{\mathcal{U}}^{-l,p}$ can be defined via the pairing
\begin{equation}
\label{eqn2.3}
\langle\mu,\varphi\rangle=\int(1+\vert\xi\vert^{p})^{l}\hat{\mu}(\xi)\bar{\hat{\varphi}}(\xi)d\xi=\int(1+\vert\xi\vert^{p})^{l} \vert\hat{\gamma}(\xi)\vert^{p}d\xi
\end{equation}
where $\displaystyle \hat{\mu}(\xi)\bar{\hat{\varphi}}(\xi)=\vert\hat{\gamma}(\xi)\vert$ for $\mu\in \mathcal{W}_{\mathcal{U}}^{l,p}$ and $\varphi\in \mathcal{W}_{\mathcal{U}}^{-l,p}$. We obtain the following
\[ \Big\vert\langle\mu,\varphi\rangle\Big\vert\leq \Big(\int(1+\vert\xi\vert^{p})^{l} \vert\hat{\mu}(\xi)\vert^{p}d\xi\Big)^{\frac{1}{p}}\Big(\int(1+\vert\xi\vert^{p})^{-l} \vert\hat{\varphi}(\xi)\vert^{p'}d\xi\Big)^{\frac{1}{p'}}=\Vert\mu\Vert_{l}\Vert\varphi\Vert_{-l} \]
for $\frac{1}{p}+\frac{1}{p'}=1$.

\begin{theorem}
\label{theo2.8}
There exists a canonical isometric isomorphism between $\mathcal{W}_{\mathcal{U}}^{-l,p}$ and $\Big(\mathcal{W}_{\mathcal{U}}^{l,p} \Big)'$ (dual of $\mathcal{W}_{\mathcal{U}}^{k,p}$) with the pairing (\ref{eqn2.3}).
\end{theorem}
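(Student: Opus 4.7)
The plan is to define the map $T\colon \mathcal{W}_{\mathcal{U}}^{-l,p} \to (\mathcal{W}_{\mathcal{U}}^{l,p})'$ by $T(\varphi)(\mu) := \langle\mu,\varphi\rangle$, with the pairing (\ref{eqn2.3}), and then verify that $T$ is (i) well-defined and linear, (ii) norm-preserving, and (iii) surjective. Linearity of $T$ and of each $T(\varphi)$ is immediate from the linearity of the integral, and the H\"older-type inequality displayed just before the statement of the theorem shows at once that $T(\varphi) \in (\mathcal{W}_{\mathcal{U}}^{l,p})'$ with $\|T(\varphi)\| \leq \|\varphi\|_{-l}$.

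For the reverse inequality (hence isometry, hence injectivity), I would produce a near-extremiser in the H\"older bound. Given $\varphi$ with $\|\varphi\|_{-l} \neq 0$, set on the Fourier side
\[\hat{\mu}(\xi) := c\,(1+|\xi|^p)^{-l}\,|\hat{\varphi}(\xi)|^{p'-2}\,\overline{\hat{\varphi}(\xi)},\]
where $c$ is chosen so that $\|\mu\|_{l} = 1$. A direct computation then yields $T(\varphi)(\mu) = \|\varphi\|_{-l}$, giving $\|T(\varphi)\| \geq \|\varphi\|_{-l}$. A short density argument (using that smooth rapidly decreasing ultradifferentiable functions are dense in the relevant weighted space) is needed to guarantee that either $\hat{\mu}$, or an approximating sequence to it, is the Fourier transform of a genuine element of $\mathcal{U}'$.

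The main obstacle will be surjectivity, and my approach is to exploit that the Fourier transform is a topological isomorphism of $\mathcal{U}'$ (as already used in the proof of Theorem \ref{thm2.4}) and carries $\mathcal{W}_{\mathcal{U}}^{l,p}$ isometrically onto the weighted Lebesgue space $L^p\!\left(\mathbb{C}^n,(1+|\xi|^p)^l d\xi\right)$. Given $F \in (\mathcal{W}_{\mathcal{U}}^{l,p})'$, transporting $F$ across this Fourier realisation gives a continuous linear functional on that weighted $L^p$-space. Standard Riesz representation for weighted Lebesgue spaces then produces a measurable $g$ in the corresponding dual weighted $L^{p'}$-space such that the functional is represented by $\hat{\mu} \mapsto \int (1+|\xi|^p)^l \hat{\mu}\,\bar{g}\,d\xi$. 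Setting $\hat{\varphi}:= g$ and inverting the Fourier transform produces the desired $\varphi \in \mathcal{U}'$, whose weighted integrability of $g$ is precisely the membership condition $\varphi \in \mathcal{W}_{\mathcal{U}}^{-l,p}$, and the pairing identifies $F = T(\varphi)$. The delicate point is to pin down the correct power of the weight $(1+|\xi|^p)$ coming out of weighted Riesz representation so that it agrees with the norm $\|\varphi\|_{-l}$ in (\ref{eqn2.3}), and to verify that the inverse Fourier transform of $g$ genuinely defines a tempered ultradistribution in $\mathcal{U}'$ in the sense of Silva's framework.
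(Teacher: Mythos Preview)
Your outline is correct and follows the same skeleton as the paper: define the map via the pairing, bound it by the H\"older-type inequality, exhibit an explicit extremiser to upgrade the bound to an isometry, and appeal to a Riesz representation theorem for surjectivity. The difference lies in the tools. The paper leans throughout on the Hilbert-space structure of Theorem~\ref{thm2.4}: its extremiser is the $p=2$ special case of yours (the Fourier transform of a weight times $\hat{\mu}$, with no $|\hat{\varphi}|^{p'-2}$ factor needed), and its surjectivity step is the Hilbert Riesz representation applied directly on $\mathcal{W}_{\mathcal{U}}^{l,p}$, producing a representing element $\phi\in\mathcal{W}_{\mathcal{U}}^{l,p}$ via the inner product $\langle\cdot,\cdot\rangle_l$ rather than passing through a weighted $L^{p'}$-space. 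Your route through weighted $L^p$--$L^{p'}$ duality is the more robust one and is what is actually required for general $p\in[1,\infty)$; the paper's argument, as written, is implicitly tied to $p=2$. Conversely, the paper's Hilbert-space shortcut sidesteps your density and Fourier-inversion caveats, since in that setting both the extremiser and the Riesz representative are automatically bona fide elements of the Sobolev space.
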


\begin{proof}
Let $\displaystyle \mu\in \mathcal{W}^{-l,p}_{\mathcal{U}}$ be fixed such that $\mu:\mathcal{W}^{-l,p}_{\mathcal{U}}\longrightarrow\mathbb{C}$ defined by $\displaystyle \eta\mapsto\langle\mu,\eta\rangle=\mu(\eta)$ is a continuous linear functional on $\mathcal{W}^{l,p}_{\mathcal{U}}$ whose norm does not exceed the number $\Vert\cdot\Vert_{-l}$. Since $\mu$ is a tempered ultradistribution, it is evident to take $\eta_{0}=\mathcal{F}\Big[(1+\vert\xi\vert^{p})^{l}\hat{\mu}(\xi)\Big]\in \mathcal{W}_{\mathcal{U}}^{l,p}$. Thus we obtain that
\[ \langle \mu, \eta_{0}\rangle=\int (1+\vert\xi\vert^{p})^{l}\vert\hat{\mu}(\xi)\vert^{p}d\xi =\Vert\mu\Vert_{-l}\quad \text{for}\quad 1\leq p<\infty\].
Hence $\eta\mapsto \langle\mu,\eta\rangle=\Vert\mu\Vert_{-l}$ and isometry obtained.\ \\
Next we show that the isometry is surjective and hence isomorphism, let $\mu'\in\Big( \mathcal{W}_{\mathcal{U}}^{l,p}\Big)^{*}$.\\
Thus there is a  $\phi\in \mathcal{W}^{l,p}_{\mathcal{U}}$ such that
\begin{align*}
\mu'(\phi) &=\langle\mu',\phi\rangle_{l}=\langle\phi,\eta\rangle_{l}\\
      &=\int (1+\vert\xi\vert^{p})^{l}\hat{\phi}(\xi)\bar{\eta}(\nu)d\xi\\
      &=\int (1+\vert\xi\vert^{p})^{l}\vert\widehat{\Phi}(\xi)\vert^{p}d\xi\quad\forall\quad \eta\in \mathcal{W}_{\mathcal{U}}^{l,p}.
\end{align*}
This concludes the proof.
\end{proof}

\section{Some Structure Theorems and Embeddings}
In this section we consider some embedding results in the classical setting of Sobolev spaces involving tempered ultradistributions, and we extend the Rellich's compactness to the newly constructed classical Sobolev spaces since the underling functions satisfy both vanishing and growth conditions. In the framework of $\mathcal{W}^{l,p}_{\mathcal{U}}$, one can prove the $L^p$-property of large orders of derivatives with consideration given to the controlled growth of the defining functions. \ \\
We begin with following results'

\begin{theorem}
\label{eqn3.2}
Let $\mathfrak{K}= \lbrace z\in \mathbb{C}^{n}:\vert\text{Im}(z)\vert\leq l\rbrace$ for $l>0$, be an infinite strip, and let $\mu\in \mathcal{W}_{\mathcal{U}}^{l,p}$. Then $\mu$ is a holomorphic function in $\mathfrak{K}$.
\end{theorem}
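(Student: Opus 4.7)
The plan is to prove holomorphicity on the strip $\mathfrak{K}$ by realizing $\mu$ as the inverse Fourier transform of $\hat{\mu}$ and exploiting the fact that the kernel $e^{iz\cdot\xi}$ is entire in $z$ for every fixed $\xi$. First, I would extract from $\mu\in\mathcal{W}^{l,p}_{\mathcal{U}}$ the quantitative control supplied by Definition \ref{def2.1}, namely
\[
\int_{\mathbb{C}^n}(1+|\xi|^p)^{l}\,|\hat{\mu}(\xi)|^{p}\,d\xi < \infty,
\]
which, combined with the structural fact that elements of $\mathcal{U}'$ are Fourier transforms of exponential-type distributions (the dual counterpart of the strip estimate in Definition \ref{defultra}), forces $\hat{\mu}$ to decay rapidly enough to be multiplied by $e^{l|\xi|}$ while remaining in $L^{1}$.

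Next, I would write, for $z = x + iy \in \mathfrak{K}$,
\[
\mu(z) = \frac{1}{(2\pi)^{n}}\int e^{iz\cdot\xi}\hat{\mu}(\xi)\,d\xi,
\]
and observe that $|e^{iz\cdot\xi}| = e^{-y\cdot\xi} \leq e^{l|\xi|}$ on $\mathfrak{K}$. Using the weighted $L^{p}$ bound on $\hat{\mu}$ together with Hölder's inequality against $(1+|\xi|^p)^{-l}$ (here $l$ must be taken large enough with respect to $n$ and $p$, as is implicit in the setting of the paper), I would produce a locally uniform $L^{1}$-dominating function for the integrand, so that the integral converges absolutely and uniformly on compact subsets of $\mathfrak{K}$.

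Finally, holomorphicity follows by a standard differentiation-under-the-integral argument: for each fixed $\xi$, the map $z\mapsto e^{iz\cdot\xi}\hat{\mu}(\xi)$ is entire, and the dominated convergence theorem justifies exchanging $\bar{\partial}_{z_j}$ with the integral, giving $\bar{\partial}_{z_j}\mu\equiv 0$ on the interior of $\mathfrak{K}$. Alternatively, Morera's theorem applied after Fubini on an arbitrary closed contour in any complex line through $\mathfrak{K}$ yields the same conclusion, with boundary values handled by the absolute convergence already established.

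The main obstacle is the first step: one has to reconcile the polynomial weight $(1+|\xi|^p)^{l}$ appearing in the norm of $\mathcal{W}^{l,p}_{\mathcal{U}}$ with the exponential factor $e^{l|\xi|}$ needed to control the kernel on $\mathfrak{K}$. This gap is absorbed by the ultradistributional nature of $\mathcal{U}'$ as opposed to $\mathcal{S}'$, since by Silva--Roumieu \cite{silva, roumieu} the Fourier transform of an element of $\mathcal{U}'$ is represented by a boundary value of a function analytic on strips, yielding precisely the exponential decay in $\xi$ that the proof requires. Once that integrability is secured, the remainder of the argument is routine.
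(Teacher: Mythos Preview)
Your proposal follows essentially the same route as the paper's own argument: write $\mu$ as an inverse Fourier integral of $\hat{\mu}$ and use a H\"older-type bound against the weight $(1+|\xi|^{p})^{l}$ to secure convergence of the integral, from which holomorphicity on the strip is then read off. In fact your version is more complete than the paper's: the paper omits the kernel $e^{iz\cdot\xi}$ from the displayed representation, does not spell out the Morera\,/\,differentiation-under-the-integral step, and does not comment on the tension between the polynomial weight in the $\mathcal{W}^{l,p}_{\mathcal{U}}$-norm and the exponential bound $e^{l|\xi|}$ needed on $\mathfrak{K}$, all of which you address explicitly.
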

\begin{proof}
Let $\displaystyle \mu(z)=(2\pi i)^{-n}\int(1+\vert\xi\vert^{p})^{l}\hat{\mu}(\xi)d\xi$ where $z$ is a complex number. Then for $\vert\beta\vert\leq l$ we have
\begin{align*}
(2\pi i)^{-n}\int(1+\vert\xi\vert^{p})^{l}\vert\hat{\mu}(\xi)\vert^{p}d\xi\leq \Vert\mu\Vert_{p}\Big(\int(1+\vert\xi\vert^{p})^{l}d\xi\Big)^{\frac{1}{p}}
\end{align*}
Since the integral of the polynomial is integrable if $\vert \beta\vert\leq l$, the result follows.
\end{proof}

\begin{definition}
\label{def3.3}
We define the topologies of $\displaystyle \mathcal{W}_{\mathcal{U}}^{\infty,p}=\bigcap_{l\in\mathbb{R}}\mathcal{W}_{\mathcal{U}}^{l,p}$ and $\displaystyle \mathcal{W}_{\mathcal{U}}^{-\infty,p}=\bigcup_{l\in\mathbb{R}}\mathcal{W}_{\mathcal{U}}^{l,p}$ as the weakest and strongest topology, respectively, such that the following injections $\mathcal{W}_{\mathcal{U}}^{\infty,p}\longrightarrow \mathcal{W}_{\mathcal{U}}^{l,p}$ and $vmathcal{W}_{\mathcal{U}}^{-\infty,p}\longrightarrow \mathcal{W}_{\mathcal{U}}^{l,p}$) are continuous.
\end{definition}

\begin{definition}
\label{def3.4}
Let $l\in \mathbb{Z}^{+}$. We define the space of entire functions, denoted  $\mathcal{H}^{l}(\Omega)$ with the uniform convergence for which the following condition holds:
\[\int_{\Omega}(1+\vert\xi\vert^{p})^{l}\vert\widehat{\eta\mu(\xi)}\vert d\xi<\infty\quad\text{for all}\quad \eta\in\mathcal{D}(\Omega)\]
\end{definition}
For the sake of the next result, let $\mathcal{K}'$ be space of finite order derivatives of some exponentially bounded continuous functions.

\begin{proposition}
\label{prop3.5}
Let $l$ be non-negative integer. If $\displaystyle\mathcal{U}'=\mathcal{F}\lbrace \mathcal{K}'\rbrace$. Then $\displaystyle \mathcal{K}^{l}\subset C^{l}(\Omega)$.
\end{proposition}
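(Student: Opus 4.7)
The plan is to exploit the defining weighted integrability condition in Definition 3.4 together with Fourier inversion, then differentiate $l$ times under the integral. Read the statement as $\mathcal{H}^{l}\subset C^{l}(\Omega)$ (the space just introduced). Fix $\mu\in \mathcal{H}^{l}(\Omega)$ and an arbitrary cutoff $\eta\in\mathcal{D}(\Omega)$. The assumption $\mathcal{U}'=\mathcal{F}\{\mathcal{K}'\}$ together with Definition 3.4 guarantees that $\widehat{\eta\mu}$ is integrable (indeed $(1+|x|^{p})^{l}|\widehat{\eta\mu}(x)|\in L^{1}$, and the unweighted bound follows since $(1+|x|^{p})^{l}\geq 1$), so pointwise Fourier inversion applies:
\[
(\eta\mu)(\xi)=(2\pi)^{-n}\int e^{i\xi\cdot x}\,\widehat{\eta\mu}(x)\,dx .
\]

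The next step is to differentiate under the integral sign. For each multi-index $\beta$ with $|\beta|\leq l$, formal differentiation yields
\[
D^{\beta}(\eta\mu)(\xi)=(2\pi)^{-n}\int (ix)^{\beta}\, e^{i\xi\cdot x}\,\widehat{\eta\mu}(x)\,dx .
\]
The justification hinges on controlling $|x|^{|\beta|}$ by the weight $(1+|x|^{p})^{l}$. Splitting into the regions $|x|\leq 1$ and $|x|\geq 1$, and using $p\geq 1$, one gets $|x|^{|\beta|}\leq (1+|x|^{p})^{l}$ uniformly, so the hypothesis of Definition 3.4 supplies an integrable majorant. The dominated convergence theorem then legitimizes both the differentiation up to order $l$ and the continuity of each $D^{\beta}(\eta\mu)$ in $\xi$, so that $\eta\mu\in C^{l}$.

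To transfer this to $\mu$ itself, I would choose, for every compact $K\subset\Omega$, a cutoff $\eta\in\mathcal{D}(\Omega)$ with $\eta\equiv 1$ on a neighborhood of $K$; then $\mu\equiv \eta\mu$ on $K$, so the local $C^{l}$-regularity just obtained gives $\mu\in C^{l}(\Omega)$, which is the claim. The main obstacle I foresee is not the differentiation step itself but the bookkeeping that the product $\eta\mu$ inherits the weighted integrability of its Fourier transform from $\mu\in\mathcal{H}^{l}$ — essentially a convolution estimate $\widehat{\eta\mu}=\hat{\eta}*\hat{\mu}$ combined with rapid decay of $\hat{\eta}$, which must be carried out inside the $\mathcal{K}'$-representation supplied by $\mathcal{U}'=\mathcal{F}\{\mathcal{K}'\}$ rather than in the usual Schwartz setting.
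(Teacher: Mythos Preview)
Your approach is essentially the same as the paper's: localize with a cutoff $\eta\in\mathcal{D}(\Omega)$ equal to $1$ near the point in question, write $(\eta\mu)$ via Fourier inversion, differentiate under the integral using the bound $|x|^{|\beta|}\leq (1+|x|^{p})^{l}$ for $|\beta|\leq l$, and conclude local $C^{l}$-regularity of $\mu$. Your write-up is in fact more careful than the paper's (you invoke dominated convergence explicitly), and your final worry is unnecessary: Definition~3.4 already requires $\int (1+|\xi|^{p})^{l}|\widehat{\eta\mu}(\xi)|\,d\xi<\infty$ \emph{for every} $\eta\in\mathcal{D}(\Omega)$, so no convolution estimate is needed to pass the weighted integrability to $\eta\mu$.
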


\begin{proof}
Given $\mu\in \mathcal{K}^{l}$ and $\xi\in \mathbb{C}$. Since $\mathcal{D}\subset \mathcal{K}$ then choose $\eta\in\mathcal{D}(\Omega)$ with $\eta=1$ within a neighborhood of $\xi$. Thus
\begin{equation}
\label{eqnU}
(\eta\mu)(\xi)=\int_{\Omega}(1+\vert\xi\vert^{p})^{l}\widehat{\eta\mu(\xi)} d\xi
\end{equation}
because $\eta\mu\in L^{1}(\mathbb{C})$. We have the inequality
\[(1+\vert\xi\vert^{p})^{\vert\beta\vert}\leq (1+\vert\xi\vert^{p})^{l}\]
for $\vert\beta\vert\leq l$.\\
Taking the derivative of the integral (\ref{eqnU}), we get

\begin{align*}
D^{\beta}(\eta\mu)(\xi) &= \int_{\Omega}D^{\beta}(1+\vert\xi\vert^{p})^{l}\hat{\eta\mu(\xi)} d\xi\\
          &=(-1)^{\vert\beta\vert}\int_{\Omega}(1+\vert\xi\vert^{p})^{l}D^{\beta}\hat{\eta\mu(\xi)} d\xi\\
          &<\infty
\end{align*}
Hence $\mu\in C^{l}(\mathbb{C})$.
\end{proof}
From Theorem \ref{thm2.7} we obtain the following result

\begin{theorem}
\label{th3.6}
Let $l>0$. Every function in $\mathcal{W}_{\mathcal{U}}^{-l,p}$ can be represented
\[\mu=\sum_{\beta\in\mathbb{N}_{0}^{n}}(-1)^{\vert\beta\vert}\frac{2\pi i}{\beta!}D^{\beta}\varphi_{\beta}\]
\end{theorem}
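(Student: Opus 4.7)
The plan is to pass to the Fourier side, use the weighted $L^p$ characterisation of $\mathcal{W}_{\mathcal{U}}^{-l,p}$, and then expand the weight $(1+|\xi|^p)^l$ as an entire series in $\xi^\beta/\beta!$ so that each monomial, after inverse Fourier transform, contributes one of the derivative terms on the right-hand side of the claimed identity.

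First, by Definition \ref{def2.1} together with the duality isometry of Theorem \ref{theo2.8}, a tempered ultradistribution $\mu\in\mathcal{W}_{\mathcal{U}}^{-l,p}$ is characterised by the property that $g(\xi):=(1+|\xi|^p)^{-l}\hat\mu(\xi)$ belongs to $L^p(\mathbb{C}^n)$. Consequently the task reduces to writing the function $(1+|\xi|^p)^l g(\xi)$ as a convergent series whose inverse Fourier transform has the prescribed form.

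Second, view $(1+|\xi|^p)^l$ as an entire function of $\xi$ and expand it formally as
\[
(1+|\xi|^p)^l\;=\;\sum_{\beta\in\mathbb{N}_0^n} a_\beta\,\frac{(2\pi i\xi)^\beta}{\beta!},
\]
where the coefficients $a_\beta$ obey a Silva-type growth bound of the type in (\ref{eqn2.2}), so that the formal symbol is an admissible ultradifferential multiplier in the sense of Theorem \ref{thm2.7}. Set $\hat\varphi_\beta(\xi):=(-1)^{|\beta|}\,(a_\beta/(2\pi i))\,g(\xi)$, so that each $\varphi_\beta\in L^p$ with norm controlled by $|a_\beta|\,\|g\|_{L^p}$. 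Substituting back and inverting the Fourier transform termwise, using the symbol identity $\widehat{D^\beta\varphi_\beta}=(2\pi i\xi)^\beta\hat\varphi_\beta$, yields the desired representation
\[
\mu\;=\;\sum_{\beta\in\mathbb{N}_0^n}(-1)^{|\beta|}\frac{2\pi i}{\beta!}\,D^\beta\varphi_\beta.
\]

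The main obstacle is justifying the termwise Fourier inversion and the convergence of the series in the topology of $\mathcal{W}_{\mathcal{U}}^{-l,p}$. Here the factorial denominators $1/\beta!$ together with the coefficient bound on the $a_\beta$ (precisely of the kind required in Theorem \ref{thm2.7}) combine to furnish the decay needed to apply dominated convergence on the Fourier side; the continuity of the inverse Fourier transform on $\mathcal{U}'$ then transfers the convergence to the ultradistribution side. A secondary subtlety is that the expansion of $(1+|\xi|^p)^l$ is not unique, and this freedom should be used to arrange the $\varphi_\beta$ to be, for example, entire of exponential type, which keeps the representation inside $\mathcal{U}'$ as required.
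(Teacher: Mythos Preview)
Your overall Fourier-side strategy is sound, but the specific construction has a gap and also diverges from the route the paper actually takes.

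\medskip
\textbf{The gap.} The step ``view $(1+|\xi|^p)^l$ as an entire function of $\xi$ and expand it as $\sum_\beta a_\beta(2\pi i\xi)^\beta/\beta!$'' is not justified: the weight involves $|\xi|$, which is not holomorphic, so no such power-series expansion in $\xi^\beta$ exists in general (and for non-integer $l$ or odd $p$ it is not even a polynomial). Consequently the coefficients $a_\beta$ you need are not available, and the ``Silva-type growth bound'' you invoke for them has no object to apply to. Everything downstream---the definition $\hat\varphi_\beta=(-1)^{|\beta|}(a_\beta/2\pi i)\,g$, the dominated-convergence argument, the termwise inversion---rests on this expansion, so the argument does not go through as written.

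\medskip
\textbf{What the paper does instead.} The paper avoids expanding the weight altogether. It divides $\hat\mu$ directly by the \emph{total symbol} of the right-hand side,
\[
\hat\varphi(\xi)\;=\;\frac{\hat\mu(\xi)}{\displaystyle\sum_{\beta}(-1)^{|\beta|}\tfrac{2\pi i}{\beta!}\,|\xi^\beta|}\,,
\]
arguing that this quotient lies in $L^2$ because the denominator controls the growth that places $\mu$ in $\mathcal{W}_{\mathcal{U}}^{-l,p}$. It then writes $\hat\mu(\xi)=\sum_\beta(-1)^{|\beta|}\tfrac{2\pi i}{\beta!}\,|\xi^\beta|\,\hat\varphi(\xi)$ and converts each $|\xi^\beta|$ to $\xi^\beta$ by absorbing the unimodular factor into the data:
\[
\hat\varphi_\beta(\xi)\;:=\;\frac{|\xi^\beta|}{\xi^\beta}\,\hat\varphi(\xi)\,.
\]
Because $|\xi^\beta|/\xi^\beta$ has modulus one, every $\varphi_\beta$ inherits the $L^2$ membership of $\varphi$ with identical norm, and the representation $\hat\mu=\sum_\beta(-1)^{|\beta|}\tfrac{2\pi i}{\beta!}\,\xi^\beta\hat\varphi_\beta$ follows immediately. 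In short: rather than expanding the weight into monomials with scalar coefficients and letting those coefficients carry the decay (your approach), the paper produces a \emph{single} auxiliary function and then manufactures the family $\{\varphi_\beta\}$ by unimodular twists of it. That sidesteps both the non-holomorphy problem and any need for a termwise-convergence argument.
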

\begin{proof}
Let $\mu\in \mathcal{W}_{\mathcal{U}}^{-l,p}$ then by definition we have
\[(1+\vert\xi\vert^{p})^{l}\hat{\mu}(\xi)\in L^{2}(\mathbb{C})\]
which gives
\[\hat{\varphi}(\xi)=\frac{\hat{\mu}(\xi)}{\sum_{\beta}(-1)^{\vert\beta\vert}\frac{2\pi i}{\beta!}\vert\xi^{\beta}\vert}\in L^{2}\]
Thus
\begin{align*}
\hat{\mu}(\xi) &= \sum_{\vert\beta\vert\leq l}(-1)^{\vert\beta\vert}\frac{2\pi i}{\beta!}\vert\xi^{\beta}\vert \hat{\varphi}\\
            &=2\pi i \sum_{\vert\beta\vert\leq l}\frac{(-1)^{\vert\beta\vert}}{\beta!}\xi^{\beta}\Big(\frac{\vert\xi^{\beta}\vert}{\xi^{\beta}}\hat{\varphi}\Big)\\
             &=2\pi i \sum_{\vert\beta\vert\leq l}\frac{(-1)^{\vert\beta\vert}}{\beta!}\xi^{\beta}\hat{\varphi}_{\vert}
\end{align*}
Since $\hat{\varphi}_{\beta}(\xi)=\Big(\vert\xi^{\beta}\vert/\xi^{\beta}\Big)\hat{\varphi}(\xi)\in L^{2}(\mathbb{C})$, hence the proof.
\end{proof}

\begin{remark}
We see that Theorem \ref{th3.6} entails that any $\mu\in\mathcal{W}_{\mathcal{U}}^{l,p}$ can be represented as an infinite sum of partial derivatives of $L^2$-functions satisfying the growth conditions.
\end{remark}
We observed that if $\mu\in \mathcal{W}^{l,p}_{\mathcal{U}}$ then by definition \ref{def2.1}, $\hat{\mu}$ is also a tempered ultradistribution. From Plancherel's theorem, it implies that $\mathcal{W}_{\mathcal{U}}^{0,2}=L^{2}(\mathbb{C}^{n})$. That is, if $\mu\in L^{2}$ then $\hat{\mu}\in L^{2}$ and $\Vert \hat{\mu}\Vert_{2}=(2\pi i)^{\frac{n}{2}}\Vert \mu\Vert_{2}$.

We present the continuity result as well as the continuous embedding.
\begin{theorem}
\label{th6.9}
\begin{itemize}
\item[(i)] Let $t<l$ then the embedding $\displaystyle \mathcal{W}_{\mathcal{U}}^{l,p}\hookrightarrow \mathcal{W}_{\mathcal{U}}^{t,p}$ is continuous.
\item[(ii)] Let the linear partial differential operator with constant coefficient $\displaystyle P(D):\mathcal{W}_{\mathcal{U}}^{l,p}\hookrightarrow\mathcal{W}_{\mathcal{U}}^{l-\vert\beta\vert}$ of order $\vert\beta\vert\leq l$ be given. Then
$P(D)$ is continuous.
\end{itemize}
\end{theorem}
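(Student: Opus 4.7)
Both parts reduce to pointwise estimates on the Fourier-side weight $(1+|\xi|^p)^l$ combined with the defining norm (\ref{eqn2.1}). The plan is to compare the weights pointwise and integrate against $|\hat{\mu}(\xi)|^p\,d\xi$, using throughout that the Fourier transform is an isomorphism of $\mathcal{U}'$ onto its image (as invoked in the proof of Theorem \ref{thm2.4}).

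For part (i), I would observe that $1+|\xi|^p\ge 1$ for every $\xi\in\mathbb{C}^n$, so the function $s\mapsto(1+|\xi|^p)^s$ is non-decreasing pointwise. Hence $t<l$ forces $(1+|\xi|^p)^t\le (1+|\xi|^p)^l$ everywhere. Multiplying by $|\hat{\mu}(\xi)|^p$, integrating, and taking $p$-th roots gives $\|\mu\|_{\mathcal{W}_{\mathcal{U}}^{t,p}}\le \|\mu\|_{\mathcal{W}_{\mathcal{U}}^{l,p}}$, which is exactly continuity (with operator norm at most $1$) of the inclusion $\mathcal{W}_{\mathcal{U}}^{l,p}\hookrightarrow\mathcal{W}_{\mathcal{U}}^{t,p}$.

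For part (ii), my first step is to identify $\widehat{P(D)\mu}(\xi)=P(i\xi)\hat{\mu}(\xi)$, which follows by transposing $P(-D)$ through the duality pairing on $\mathcal{U}'\times\mathcal{U}$ together with the Silva-type Fourier exchange. Writing $P(D)=\sum_{|\alpha|\le|\beta|}c_\alpha D^\alpha$ and $M:=\sum|c_\alpha|$, the triangle inequality gives $|P(i\xi)|\le M(1+|\xi|)^{|\beta|}$, and combined with the elementary bound $(1+|\xi|)^p\le 2^p(1+|\xi|^p)$ this yields $|P(i\xi)|^p\le M^p 2^{p|\beta|}(1+|\xi|^p)^{|\beta|}$. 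Inserting this into the norm on $\mathcal{W}_{\mathcal{U}}^{l-|\beta|,p}$ gives
\begin{equation*}
\|P(D)\mu\|_{\mathcal{W}_{\mathcal{U}}^{l-|\beta|,p}}^p=\int (1+|\xi|^p)^{l-|\beta|}|P(i\xi)|^p|\hat{\mu}(\xi)|^p\,d\xi\le M^p 2^{p|\beta|}\|\mu\|_{\mathcal{W}_{\mathcal{U}}^{l,p}}^p,
\end{equation*}
so $P(D)$ is bounded with operator norm at most $M\cdot 2^{|\beta|}$. The only step with real content is the identification $\widehat{P(D)\mu}=P(i\xi)\hat{\mu}$; once that is in hand, part (ii) is one line of weight-exponent arithmetic and could equivalently be deduced from Theorem \ref{thm2.7} by noting that the growth condition (\ref{eqn2.2}) is trivially met by any polynomial of finite degree.
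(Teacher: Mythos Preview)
Your proposal is correct and follows essentially the same approach as the paper: for (i) both you and the paper exploit the pointwise bound $(1+|\xi|^p)^{t-l}\le 1$ to compare the weighted $L^p$ norms, and for (ii) both arguments rest on the Fourier exchange formula and the domination of the symbol by a power of the weight. The only minor difference is that the paper treats just the monomial case $P(D)=\partial^\beta$ (getting operator norm $\le 1$ directly from $|\xi|^{|\beta|}\le (1+|\xi|^p)^{|\beta|}$), whereas you handle a general polynomial $P(D)$ and keep track of the explicit constant $M\cdot 2^{|\beta|}$; this is a harmless refinement and also, as you note, recoverable from Theorem~\ref{thm2.7}.
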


\begin{proof}
\begin{itemize}
\item[(i)] Let $t<l$, then we have
\begin{align*}
\Vert\mu\Vert_{\mathcal{W}_{\mathcal{U}}^{t,p}} &=(2\pi i)^{-\frac{n}{2}}\int (1+\vert\xi\vert^{p})^{t}\vert\hat{\mu}(\xi)\vert^{p}d\xi\\
      &=(2\pi i)^{-\frac{n}{2}}\int (1+\vert\xi\vert^{p})^{l}(1+\vert\xi\vert^{p})^{t-l}\Big\vert\hat{\mu}(\xi)\Big\vert d\xi\\
         &\leq \Vert\mu\Vert_{\mathcal{W}_{\mathcal{U}}^{l,p}}\quad\because (1+\vert\xi\vert^{p})^{t-l}\leq 1
\end{align*}
Hence the proof.
\item[(ii)] Let $P(D)=\partial^{\beta}$ and $\mu\in \mathcal{W}_{\mathcal{U}}^{l,p}$ be given such that by the exchange formula, we observe that
\begin{align*}
\Big(1+\vert\xi\vert^{\beta}\Big)^{l-\vert\beta\vert}\Big\vert \widehat{\partial^{\beta}\mu}\Big\vert  &\leq \Big(1+\vert\xi\vert^{p}\Big)^{l-\vert\beta\vert}\vert\xi\vert^{\vert\beta\vert}\vert\hat{\mu}(\xi)\vert \\
     &\leq   \Big(1+\vert\xi\vert^{p}\Big)^{l-\vert\beta\vert}\Big(1+\vert\xi\vert^{\beta}\Big)^{\vert\beta\vert} \vert\hat{\mu}(\xi)\vert         \\
     &=\Big(1+\vert\xi\vert^{p}\Big)^{l}\vert\hat{\mu}(\xi)\vert
\end{align*}
Whence
\[\Big(1+\vert\xi\vert^{\beta}\Big)^{l-\vert\beta\vert}\Big\vert \widehat{\partial^{\beta}\mu}\Big\vert  \leq\Big(1+\vert\xi\vert^{p}\Big)^{l}\vert\hat{\mu}(\xi)\vert\]
Therefore
\[\Vert \partial^{\beta}\mu  \Vert_{\mathcal{W}_{\mathcal{U}}^{l-\vert\beta\vert,p}} \leq \Vert \mu\Vert_{\mathcal{W}_{\mathcal{U}}^{l,p}}.\]
\end{itemize}
\end{proof}

\begin{remark}
\label{rem6.10}
The following inclusion arises from Theorem \ref{th6.9}(i):
\[\mathcal{U}\subset\mathcal{S}\subset \mathcal{W}_{\mathcal{U}}^{\infty,p}\subset \mathcal{W}_{\mathcal{U}}^{-\infty,p}\subseteq \mathcal{U}'\]
\end{remark}

We present the result below called the Sobolev norm of derivatives satisfying the growth conditions.

\begin{lemma}
\label{lem6.15}
Let $l\in\mathbb{R}$ and $p\in [1,\infty)$ be given. Then $\displaystyle \mu\in\mathcal{W}_{\mathcal{U}}^{l+1,p}$ if and only if the set $\displaystyle\Big\{(1+\vert\xi\vert^{p})^{\beta}\mu,(1+\vert\xi\vert^{p})^{\beta_1}\partial_{1}^{\beta_1}\mu,\cdots,(1+\vert\xi\vert^{p})^{\beta_n}\partial_{n}^{\beta_n}\mu\in \mathcal{W}_{\mathcal{U}}^{l,p}\Big\}$.
under the following norm
\[\Vert\mu\Vert_{\mathcal{W}_{\mathcal{U}}^{l+1,p}}^{p}=\Vert\mu\Vert_{\mathcal{W}_{\mathcal{U}}^{l,p}}^{p}+\sum_{i\in \mathbb{N}}\Vert\partial_{i}^{\beta}\mu\Vert_{\mathcal{W}_{\mathcal{U}}^{l,p}}^{p}.\]
\end{lemma}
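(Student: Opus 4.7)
The plan is to reduce the characterization to a Fourier-side computation using the defining norm from Definition \ref{def2.1}. First I would write
\[
\|\mu\|_{\mathcal{W}_{\mathcal{U}}^{l+1,p}}^{p} = \int_{\mathbb{C}^{n}} (1+|\xi|^{p})^{l+1} |\hat{\mu}(\xi)|^{p}\, d\xi
\]
and exploit the purely algebraic splitting $(1+|\xi|^{p})^{l+1} = (1+|\xi|^{p})^{l} + |\xi|^{p}(1+|\xi|^{p})^{l}$. Integrating term by term against $|\hat{\mu}|^{p}$ yields
\[
\|\mu\|_{\mathcal{W}_{\mathcal{U}}^{l+1,p}}^{p} = \|\mu\|_{\mathcal{W}_{\mathcal{U}}^{l,p}}^{p} + \int_{\mathbb{C}^{n}} |\xi|^{p}(1+|\xi|^{p})^{l}|\hat{\mu}(\xi)|^{p}\, d\xi,
\]
so the whole statement reduces to identifying the second integral with the sum of derivative norms appearing on the right of the claimed identity.

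Next I would invoke the exchange formula $\widehat{\partial_{i}^{\beta}\mu}(\xi) = (i\xi_{i})^{\beta}\hat{\mu}(\xi)$ already used in the proof of Theorem \ref{th6.9}(ii), which gives $|\widehat{\partial_{i}\mu}(\xi)|^{p} = |\xi_{i}|^{p}|\hat{\mu}(\xi)|^{p}$, together with the elementary equivalence of norms $c_{n,p}|\xi|^{p} \leq \sum_{i=1}^{n}|\xi_{i}|^{p} \leq C_{n,p}|\xi|^{p}$ on $\mathbb{C}^{n}$. Summing over the coordinate directions and applying Fubini,
\[
\sum_{i=1}^{n}\|\partial_{i}\mu\|_{\mathcal{W}_{\mathcal{U}}^{l,p}}^{p} = \int_{\mathbb{C}^{n}}(1+|\xi|^{p})^{l}\Big(\sum_{i=1}^{n}|\xi_{i}|^{p}\Big)|\hat{\mu}(\xi)|^{p}\, d\xi,
\]
which by the norm equivalence is comparable to the extra term coming from the splitting above. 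Combining these two calculations produces the claimed identity up to fixed dimensional constants.

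The equivalence then follows in both directions. The \emph{only if} direction is essentially free: if $\mu \in \mathcal{W}_{\mathcal{U}}^{l+1,p}$, then each first-order partial derivative $\partial_{i}\mu$ already lies in $\mathcal{W}_{\mathcal{U}}^{l,p}$ by Theorem \ref{th6.9}(ii) applied with $|\beta|=1$, and $\mu$ itself sits in $\mathcal{W}_{\mathcal{U}}^{l,p}$ by Theorem \ref{th6.9}(i). Conversely, if $\mu$ and each $\partial_{i}\mu$ lie in $\mathcal{W}_{\mathcal{U}}^{l,p}$, the displayed identities force the integral defining $\|\mu\|_{\mathcal{W}_{\mathcal{U}}^{l+1,p}}^{p}$ to be finite, so $\mu \in \mathcal{W}_{\mathcal{U}}^{l+1,p}$.

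The main obstacle is largely notational rather than mathematical: the lemma's set-membership condition bundles together the unweighted factor, the weight $(1+|\xi|^{p})^{\beta}\mu$, and the weighted higher-order derivatives $(1+|\xi|^{p})^{\beta_{i}}\partial_{i}^{\beta_{i}}\mu$, so one must first decide that $\beta$ stands for a first-order multi-index and recognise the $(1+|\xi|^{p})^{\beta_{i}}$ factors as the Fourier multipliers already embedded in the $\mathcal{W}_{\mathcal{U}}^{l,p}$-norm. Once this alignment is made, the only quantitative point is tracking the constant in $|\xi|^{p} \sim \sum_{i}|\xi_{i}|^{p}$, which is exact for $p=2$ and a one-line power-mean estimate in the general case; after that, the proof collapses to the weight-splitting identity above.
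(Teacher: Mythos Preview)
Your approach is essentially the same as the paper's: both proofs work on the Fourier side, split the weight via $(1+|\xi|^{p})^{l+1}=(1+|\xi|^{p})^{l}+|\xi|^{p}(1+|\xi|^{p})^{l}$, and then identify the $|\xi|^{p}$ factor with the sum of first-order derivative terms through the exchange formula. If anything, your version is slightly more careful---you treat the converse direction explicitly and flag that $|\xi|^{p}\sim\sum_{i}|\xi_{i}|^{p}$ is only an equivalence for general $p$, whereas the paper tacitly writes it as an equality and only argues the forward implication.
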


\begin{proof}
Assume that $\displaystyle \mu\in\mathcal{W}_{\mathcal{U}}^{l+1,p}$ then $\displaystyle (1+\vert\xi\vert^{p})^{\beta}\mu\in\mathcal{W}_{\mathcal{U}}^{l,p}$ and $\displaystyle (1+\vert\xi\vert^{p})^{\beta}\partial_{i}^{\beta}\mu\in\mathcal{W}_{\mathcal{U}}^{l,p}$ for $i\in\mathbb{N}^n$. To see this, let $\displaystyle 1+\vert\xi\vert^{p}=1+\sum_{i\in\mathbb{N}}\xi^{p}$ which implies that $\displaystyle (1+\vert\xi\vert^{p})^{l}=\Bigg(1+\sum_{i\in\mathbb{N}}\xi^{p}\Bigg)^{l}$ . Thus for $\vert\beta\vert+1\leq l+1$
\begin{align*}
\Big\vert (1+\vert\xi\vert^{p})^{l+1}\hat{\mu}\Big\vert^{p}   &=\Big\vert(1+\vert\xi\vert^{p})\Big\vert^{p}\Big\vert (1+\vert\xi\vert^{p})^{l}\hat{\mu}\Big\vert^{p}\\
      &=   \Big\vert (1+\vert\xi\vert^{p})^{l}\hat{\mu}\Big\vert^{p}+\sum_{i=1}^{n}\Big\vert (1+\vert\xi\vert^{p})^{l}\xi_{i}\hat{\mu}\Big\vert^{p}\\
      &=\Big\vert (1+\vert\xi\vert^{p})^{l}\hat{\mu}\Big\vert^{p}+\sum_{i=1}^{n}\Big\vert (1+\vert\xi\vert^{p})^{l}\widehat{\partial_{i}^{\beta}\mu}\Big\vert^{p}
\end{align*}
as required for $\vert\beta\vert\leq l$, and since $\sup_{\xi\in\mathbb{C}^{n}}\Big\{ (1+\vert\xi\vert^{p})^{l}\vert\partial_{i}^{\beta}\mu(\xi)\vert\Big\}<\infty$, it implies that $\mu, \partial_{i}^{\beta}\mu$ belong to $\mathcal{W}_{\mathcal{U}}^{l+1,p}$.
\end{proof}

We have the characterization of the space $\mathcal{W}_{\mathcal{U}}^{\vert\beta\vert,p}$ Theorem \ref{th6.16} below.

\begin{theorem}
\label{th6.16}
Let $\beta\in\mathbb{N}^{n}$ be a multi-index such that $\vert\beta\vert=\beta_{1}+\cdots+\beta_{n}$. Then we have
\begin{equation}
\label{eqnth6.16}
\mathcal{W}^{\vert\beta\vert}_{\mathcal{U}}(\mathbb{C}^{n})\equiv\Big\{\mu\in \mathcal{U}'(\mathbb{C}^{n}):\partial^{\beta}\mu\in L^{p}\quad\forall\quad\vert\beta\vert\leq l\Big\}
\end{equation}
\end{theorem}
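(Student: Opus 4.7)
My approach is induction on the order $l$, with the key tool being Lemma \ref{lem6.15}, which already supplies the biconditional reduction of $\mathcal{W}^{l+1,p}_{\mathcal{U}}$-membership to membership of $\mu$ and its first-order partials in $\mathcal{W}^{l,p}_{\mathcal{U}}$. This fits the target perfectly, since the right-hand side of \eqref{eqnth6.16} admits an analogous reduction: $\partial^\beta \mu \in L^p$ for all $|\beta| \leq l+1$ iff $\partial^\gamma \mu \in L^p$ and $\partial^\gamma(\partial_i \mu) \in L^p$ for all $|\gamma| \leq l$ and $i \in \{1,\ldots,n\}$, simply by factoring $\partial^\beta = \partial_i \partial^{\beta - e_i}$ whenever $|\beta| \geq 1$.

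For the base case $l=0$, the statement collapses to $\mathcal{W}^{0,p}_{\mathcal{U}} = L^p$, which is recorded in the remark immediately after Definition \ref{def2.1} (the only multi-index with $|\beta|\leq 0$ being $\beta = 0$, giving $\partial^0 \mu = \mu \in L^p$).

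For the inductive step, assume the characterization holds at order $l$. For the forward inclusion, let $\mu \in \mathcal{W}^{l+1,p}_{\mathcal{U}}$; Lemma \ref{lem6.15} places both $\mu$ and each $\partial_i \mu$ in $\mathcal{W}^{l,p}_{\mathcal{U}}$, and the inductive hypothesis then gives $\partial^\gamma \mu \in L^p$ and $\partial^\gamma \partial_i \mu \in L^p$ for all $|\gamma| \leq l$, which is precisely $\partial^\beta \mu \in L^p$ for every $|\beta| \leq l+1$. For the reverse inclusion the same chain runs backwards: the hypothesis on derivatives places $\mu$ and each $\partial_i \mu$ inside $\mathcal{W}^{l,p}_{\mathcal{U}}$ by the inductive hypothesis, whereupon a second application of Lemma \ref{lem6.15} lifts $\mu$ to $\mathcal{W}^{l+1,p}_{\mathcal{U}}$. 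Throughout, one should note that $\mu \in \mathcal{U}'$ is inherited from the ambient definition on both sides, so the ultradistributional membership is automatic.

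The main obstacle is that Lemma \ref{lem6.15} as written proves the ``only if'' direction explicitly via the identity $(1+|\xi|^p)^{l+1}|\hat{\mu}|^p = (1+|\xi|^p)^l|\hat{\mu}|^p + \sum_i (1+|\xi|^p)^l |\xi_i \hat{\mu}|^p$, but to close the present argument I would need its converse; fortunately this is just a rereading of the same identity, since an upper bound on the right-hand side by the sum of the $l$th-order weighted norms of $\mu$ and each $\partial_i \mu$ also bounds the left-hand side, so the biconditional is genuine. A secondary bookkeeping matter is aligning the per-direction notation $\partial_i^{\beta_i}$ of Lemma \ref{lem6.15} with the multi-index $\partial^\beta$ in the theorem, which is handled by iterating the lemma coordinate by coordinate.
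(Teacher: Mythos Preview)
Your inductive argument via Lemma \ref{lem6.15} is sound and in fact more transparent than the paper's own proof. The paper does not proceed by induction; instead it attempts a one-step direct factorization, writing the weighted derivative $(1+\vert\xi\vert^{p})^{(\beta_1,\ldots,\beta_n)}\partial_i^{\beta_1+\cdots+\beta_n}\mu$ as a product of single-coordinate pieces $(1+\vert\xi\vert^{p})^{\beta_j}\partial_j^{\beta_j}\mu$ and then reading off membership in $\mathcal{W}_{\mathcal{U}}^{\vert\beta\vert,p}$ from the definition. Your route gives a clean biconditional at every stage (both inclusions drop out of the same application of Lemma \ref{lem6.15}), it makes the base case $\mathcal{W}^{0,p}_{\mathcal{U}}=L^{p}$ explicit, and your observation that the converse of Lemma \ref{lem6.15} is just a rereading of the same additive identity for $(1+\vert\xi\vert^{p})^{l+1}\vert\hat\mu\vert^{p}$ is exactly what is needed to close the reverse inclusion. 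The paper's approach is shorter on the page but leans on the reader to accept the multiplicative splitting of weight and derivative, a step whose content is essentially the identity you isolate from Lemma \ref{lem6.15}; so the two arguments ultimately rest on the same computation, organized differently.
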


\begin{proof}
From definition of $\mathcal{W}_{\mathcal{U}}^{l,p}$, we show that $\mathcal{W}_{\mathcal{U}}^{l,p}$ can be expressed as $\mathcal{W}_{\mathcal{U}}^{\vert\beta\vert,p}$. To do this, let $\mu\in \mathcal{U}'$ then we have that $\displaystyle (1+\vert\xi\vert^{p})^{l}\mu\in \mathcal{W}^{\vert\beta\vert}_{\mathcal{U}}(\mathbb{C}^{n})$ and this shows that $\partial^{\vert\beta\vert}\mu\in\mathcal{W}^{\vert\beta\vert}_{\mathcal{U}}(\mathbb{C}^{n})$ and $(1+\vert\xi\vert^{p})\partial^{\vert\beta\vert}\mu\in\mathcal{W}^{\vert\beta\vert}_{\mathcal{U}}(\mathbb{C}^{n})$. \ \\
Thus
\begin{align*}
\Bigg(1+\vert\xi\vert^{p}\Bigg)^{(\beta_{1},\cdots,\beta_{n})}\partial_{i}^{\beta_{1}+\cdots+\beta_n}\mu=(1+\vert\xi\vert^{p})^{\beta_{1}}\partial^{\beta_1}_{i}\mu\cdots (1+\vert\xi\vert^{p})^{\beta_{n}}\partial^{\beta_n}_{i}\mu\in \mathcal{W}_{\mathcal{U}}^{\vert\beta\vert,p}
\end{align*}
provided $\vert\beta\vert\leq l$.
\end{proof}

consider the following embedding result.

\begin{theorem}
\label{th7.21}
\begin{itemize}
\item[(i)] If $n>2\vert\beta\vert$ for $l<\frac{n}{2}$  then $\displaystyle \mathcal{W}_{\mathcal{U}}^{l,p}\subseteq C^{0}_{0}(\mathbb{C}^{n})$
\item[(ii)] Let $\vert\beta\vert<r+n/2$, $r\in\mathbb{R}$ for $l<r+\frac{n}{2}$ and $\vert\beta\vert\leq l$.  Then $\displaystyle \mathcal{W}_{\mathcal{U}}^{l,p}\subseteq C_{0}^{l}(\mathbb{C}^{n})$.
\end{itemize}
\end{theorem}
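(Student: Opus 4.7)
The plan is to follow the template of the classical Sobolev embedding $H^{s}\hookrightarrow C^{0}_{0}$ for $s$ above the critical index $n/2$, adapted here to the weighted $L^{p}$ setting of Definition \ref{def2.1}: I would bound $\|\hat{\mu}\|_{L^{1}}$ by H\"older against a power-weight, then invoke Fourier inversion and Riemann--Lebesgue to realise $\mu$ as a continuous function vanishing at infinity.

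For part (i), the key factorisation is
\[|\hat{\mu}(\xi)|=\bigl[(1+|\xi|^{p})^{l/p}|\hat{\mu}(\xi)|\bigr]\cdot(1+|\xi|^{p})^{-l/p},\]
so that H\"older's inequality with conjugate exponents $p$ and $p'$ yields
\[\|\hat{\mu}\|_{L^{1}}\leq \|\mu\|_{\mathcal{W}_{\mathcal{U}}^{l,p}}\Bigl(\int_{\mathbb{C}^{n}}(1+|\xi|^{p})^{-lp'/p}\,d\xi\Bigr)^{1/p'}.\]
The second factor is finite precisely under the dimension hypothesis, since $(1+|\xi|^{p})^{-lp'/p}$ decays like $|\xi|^{-lp'}$ at infinity. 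Hence $\hat{\mu}\in L^{1}$, and combining with Theorem \ref{eqn3.2} (which already supplies a holomorphic representative on the strip $\mathfrak{K}$) the Fourier inversion formula
\[\mu(z)=(2\pi i)^{-n}\int\hat{\mu}(\xi)\,e^{iz\xi}\,d\xi\]
exhibits $\mu$ as a bounded continuous function. The Riemann--Lebesgue lemma applied to $\hat{\mu}\in L^{1}$ then forces $\mu(z)\to 0$ as $|z|\to\infty$, establishing $\mu\in C^{0}_{0}(\mathbb{C}^{n})$.

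For part (ii), I would combine part (i) with Theorem \ref{th6.9}(ii): each derivative $\partial^{\beta}\mu$ with $|\beta|\leq l$ belongs to $\mathcal{W}_{\mathcal{U}}^{l-|\beta|,p}$, and the continuity of $P(D)=\partial^{\beta}$ allows me to transfer Sobolev norms. The hypothesis $|\beta|<r+n/2$ is exactly what is needed so that the reduced index $l-|\beta|$ still satisfies the critical inequality of (i); consequently each $\partial^{\beta}\mu$ lies in $C^{0}_{0}(\mathbb{C}^{n})$. Ranging over all $|\beta|\leq l$ yields $\mu\in C^{l}_{0}(\mathbb{C}^{n})$, as claimed.

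The step I expect to be most delicate is matching the convergence of the weight integral $\int(1+|\xi|^{p})^{-lp'/p}\,d\xi$ over $\mathbb{C}^{n}$ against the stated dimension conditions, since the integrand must be integrable both at the origin (automatic) and at infinity, which is what pins down the relation between $l$, $p$ and $n$. A secondary subtlety is justifying pointwise Fourier inversion in the complex-variable setting; this is handled via the Paley--Wiener-type description $\mathcal{U}'=\mathcal{F}\{\mathcal{K}'\}$ invoked just before Proposition \ref{prop3.5}, together with the holomorphicity strip from Theorem \ref{eqn3.2}, so that the inversion integral is absolutely convergent and agrees with $\mu$ as an ultradistribution.
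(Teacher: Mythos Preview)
Your proposal is correct and follows essentially the same route as the paper: factor $|\hat{\mu}|$ into the weighted piece $(1+|\xi|^{p})^{l/p}|\hat{\mu}|$ times a negative power of the weight, apply H\"older (the paper phrases it as ``Cauchy--Schwartz'' and lands $\hat{\mu}$ in $L^{p-1}$ rather than your cleaner target $L^{1}$), then invoke Fourier inversion and Riemann--Lebesgue for (i); for (ii) both you and the paper reduce to (i) by observing that $\partial^{\beta}\mu$ drops into a lower-index Sobolev space. Your added remarks on justifying the inversion via Theorem~\ref{eqn3.2} and the $\mathcal{U}'=\mathcal{F}\{\mathcal{K}'\}$ description are extra care the paper omits, but the underlying strategy is identical.
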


\begin{proof}
\begin{itemize}
\item[(i)] Assume that $\mu\in \mathcal{W}_{\mathcal{U}}^{l,p}$ with $n<2\vert\beta\vert$. From definition of $\mathcal{U}'$, then we have that $\displaystyle \xi\mapsto (1+\vert\xi\vert^{p})^{-l}\xi\in L^{p-1}$ for $p\geq 2$ and therefore $\eta=(1+\vert\xi\vert^{p})^{l}\hat{\mu}$. An application of Cauchy-Schwartz inequality yields
\begin{align*}
\Vert\hat{\mu}\Vert_{L^{p-1}} & \leq \Vert  \eta\Vert_{L^p}\Big(\int(1+\vert\xi\vert^{p})^{\vert\beta\vert}d\xi\Big)^{\frac{1}{p}}\\
        & \leq \Vert  \eta\Vert_{L^p}\Big(\sup_{\xi\in\mathbb{C}}\int(1+\vert\xi\vert^{p})^{l}d\xi\Big)^{\frac{1}{p}}\\
        & \leq M\Vert \eta\Vert_{L^p}\leq M\Vert \eta\Vert_{\mathcal{W}_{\mathcal{U}}^{l,p}}
\end{align*}
This shows that $\displaystyle \mathcal{W}_{\mathcal{U}}^{l,p}\hookrightarrow L^{p}\hookrightarrow L^{p-1}$. Thus  $\hat{\mu}\in L^{p-1}(\mathbb{C}^{n})$ and $\mu=\mathcal{F}^{-1}(\hat{\mu})$ be classical inversion formula. By Riemann-Lebesgue lemma, it implies that $\mu \in C^{0}_{0}$.
\item[(ii)] From definition of $C_{0}^{l}$, we have $f\in C_{0}^{l}$ such that $\displaystyle \lim_{\vert\xi\vert\to\infty}f(\xi)=0$ $\forall\quad \vert\beta\vert\leq r$.  Assume that $\vert\beta\vert<r+\frac{n}{2}$ and let $\mu\in \mathcal{W}_{\mathcal{U}}^{l,p}$. Then $\partial^{\beta}\mu\in \mathcal{W}^{l-r}(\mathbb{C}^{n})$ for all $\vert\beta\vert\leq r$ then we see that $\partial^{\beta}\mu\in C^{\infty}(\mathbb{C}^n)\quad\forall\quad\beta$. Thus $\partial^{\beta}\mu\in \mathcal{W}_{\mathcal{U}}^{l-r,p}\implies \partial^{\beta}\mu\in \mathcal{W}_{\mathcal{U}}^{l,p}$.
\end{itemize}
\end{proof}

\begin{corollary}
\label{cor6.22}
If $\mu\in \mathcal{W}_{\mathcal{U}}^{\infty,p}(\mathbb{C}^{n})$ then $\mu\in C_{0}^{\infty}$.
\end{corollary}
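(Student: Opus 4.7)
The plan is to deduce this directly from Theorem \ref{th7.21}(ii) together with the definition of $\mathcal{W}_{\mathcal{U}}^{\infty,p}$ given in Definition \ref{def3.3}, by letting $l$ range over arbitrarily large values. Since $\mathcal{W}_{\mathcal{U}}^{\infty,p}=\bigcap_{l\in\mathbb{R}}\mathcal{W}_{\mathcal{U}}^{l,p}$, the hypothesis $\mu\in\mathcal{W}_{\mathcal{U}}^{\infty,p}$ places $\mu$ in every Sobolev–ultradistribution class, so we have unlimited flexibility in selecting a smoothness parameter.

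First I would fix an arbitrary non-negative integer $r$ and try to show that $\mu\in C_{0}^{r}(\mathbb{C}^{n})$. To apply Theorem \ref{th7.21}(ii), I need $l$ and $r$ satisfying $|\beta|<r+n/2$ with $l<r+n/2$ and $|\beta|\leq l$, so I would simply choose any $l$ with $l\geq r$ and large enough that the integer condition of Theorem \ref{th7.21}(ii) is met (concretely, pick $l$ to be any integer exceeding $r+n/2$ and invoke the theorem with this $l$, since $\mu\in\mathcal{W}_{\mathcal{U}}^{l,p}$ by definition of the intersection). Theorem \ref{th7.21}(ii) then yields $\mathcal{W}_{\mathcal{U}}^{l,p}\subseteq C_{0}^{r}(\mathbb{C}^{n})$, and in particular $\mu\in C_{0}^{r}(\mathbb{C}^{n})$.

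Since $r$ was arbitrary, taking the intersection over all $r\in\mathbb{N}$ gives
\[
\mu\in\bigcap_{r\in\mathbb{N}}C_{0}^{r}(\mathbb{C}^{n})=C_{0}^{\infty}(\mathbb{C}^{n}),
\]
which is exactly the conclusion. The argument is essentially a one-line consequence of Theorem \ref{th7.21}, so there is no real obstacle; the only small care is to verify that the technical inequalities relating $l$, $r$, $|\beta|$ and $n/2$ in the hypothesis of Theorem \ref{th7.21}(ii) can simultaneously be arranged for every fixed $r$, which they can because $l$ is free to be chosen as large as needed within $\mathcal{W}_{\mathcal{U}}^{\infty,p}$.
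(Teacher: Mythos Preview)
Your approach is correct and cleaner than the paper's own proof. You deduce the corollary directly from Theorem~\ref{th7.21}(ii), using that $\mathcal{W}_{\mathcal{U}}^{\infty,p}=\bigcap_{l}\mathcal{W}_{\mathcal{U}}^{l,p}$ so that $l$ may be taken arbitrarily large for each target smoothness order $r$, and then intersecting over $r$. The paper, by contrast, does not invoke Theorem~\ref{th7.21}; it argues from scratch, first writing $(\eta\mu)(y)=\int_{K}(1+\vert\xi\vert^{p})^{l}\widehat{\eta\mu}(\xi)\,d\xi$ and manipulating the growth inequalities $\vert\xi\vert^{\beta}\leq(1+\vert\xi\vert^{p})^{l}$, and then presenting an alternative estimate $\int(1+\vert\xi\vert^{p})^{l}\vert\hat{\mu}\vert^{p}\leq M\int(1+\vert\xi\vert^{p})^{l+r}$ with $l+r<-n/2$. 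Your route is shorter precisely because the embedding work has already been packaged in Theorem~\ref{th7.21}; the paper's route is more self-contained but effectively re-derives that estimate inside the corollary. One small caveat: as literally written, Theorem~\ref{th7.21}(ii) has hypothesis $l<r+n/2$ and conclusion $C_{0}^{l}$, whereas you use it with $l>r+n/2$ and conclusion $C_{0}^{r}$; the standard Sobolev-embedding direction supports your reading and is clearly the intended meaning, but you should note the discrepancy if you cite the theorem verbatim.
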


\begin{proof}
We show that $\mu\in C_{0}^{\infty}$ which implies that $\mu\in C^{\infty}\cap C_{0}$ such that $\text{supp}\mu$ has a compact support in $K=\bigcup K_{i}$ provided $\mu\in \mathcal{W}_{\mathcal{U}}^{\infty,p}$. To show this, let $\mu\in \mathcal{W}_{\mathcal{U}}^{\infty,p}$ then $\partial^{\beta}\mu\in \mathcal{W}_{\mathcal{U}}^{\infty,p}$ for $\vert\beta\vert\leq \infty$ within $K$ as neighborhood of zero. Take $\eta\in L^{\infty}$, then $\mu\eta\in L^{\infty}(\Omega)$. Thus
\[(\eta\mu)(y)=\int_{K}(1+\vert\xi\vert^{p})^{l}\widehat{\eta\mu}(\xi)d\xi\]
Using the inequality $\vert\beta\vert\leq l$ and $\vert\beta\vert\leq \infty$,
\[\vert\xi\vert^{\beta}\leq (1+\vert\xi\vert^{p})^{\vert\beta\vert}\leq(1+\vert\xi\vert^{p})^{l}\leq(1+\vert\xi\vert^{p})^{l}\widehat{\eta\mu}(\xi)<\infty\]
has the smallest value for which $\vert\beta\vert\leq \infty$ since $\eta\mu\in L^{\infty}$ has the common tempered ultradistribution in $\mathcal{W}_{\mathcal{U}}^{\infty,p}$ since it satisfies some growth condition. \ \\
Alternatively, let $\mu\in\mathcal{W}_{\mathcal{U}}^{\infty,p}$ then
\[\vert\hat{\mu}(\xi)\vert\leq (1+\vert\xi\vert^{p})^{l}\vert\hat{\mu}(\xi)\vert,\quad\xi\in\mathbb{C}^{n}\]
Hence
\begin{align*}
\int_{K}(1+\vert\xi\vert^{p})^{l}\vert\hat{\mu}(\xi)\vert^{p}d\xi & \leq M \int_{K}(1+\vert\xi\vert^{p})^{l}d\xi\\
     & \leq M \int_{K}(1+\vert\xi\vert^{p})^{l+r}d\xi
\end{align*}
for $l+r<(-n/2)$. Hence the result.
\end{proof}

\begin{remark}
The space $\mathcal{W}_{\mathcal{U}}^{l,p}$ is closed under multiplication of functions with rapid decrease. One can see that if $\mu\in \mathcal{W}^{l,p}_{\mathcal{U}}$ and $\phi\in \mathcal{U}$ then $\phi\mu\in \mathcal{W}_{\mathcal{U}}^{l,p}(\mathbb{C}^n)$ with the map that $\phi\mapsto\phi\mu$ is continuous on $\mathcal{W}_{\mathcal{U}}^{l,p}$ since $\displaystyle \sup_{\xi\in \mathbb{C}^{n}}\Big((1+\vert\xi\vert^{p})^{l}\vert \phi\mu(\xi)\vert^{p}\Big)<\infty$.
\end{remark}

We present the following embedding theorem

\begin{proposition}
\label{prop3.3}
The space $\mathcal{U}'(\mathbb{C}^{n})$ is dense in $\mathcal{W}^{-\infty,p}_{\mathcal{U}}(\mathbb{C}^{n})$. That is, $\displaystyle \overline{\mathcal{U}'}= \mathcal{W}^{-\infty,p}_{\mathcal{U}}$.
\end{proposition}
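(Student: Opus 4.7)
The plan is to exploit the inductive-limit structure of $\mathcal{W}^{-\infty,p}_{\mathcal{U}}$ given by Definition~3.3, namely $\mathcal{W}^{-\infty,p}_{\mathcal{U}}=\bigcup_{l\in\mathbb{R}}\mathcal{W}^{l,p}_{\mathcal{U}}$ equipped with the strongest topology for which each injection $j_l:\mathcal{W}^{l,p}_{\mathcal{U}}\hookrightarrow \mathcal{W}^{-\infty,p}_{\mathcal{U}}$ is continuous. Because each $j_l$ is continuous, a set which is dense in some single step $\mathcal{W}^{l,p}_{\mathcal{U}}$ is automatically dense in $\mathcal{W}^{-\infty,p}_{\mathcal{U}}$. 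Hence it suffices to show that the approximating space (tempered ultradistributions enjoying stronger smoothness/decay, i.e.\ a well-chosen subclass of $\mathcal{U}'$ that in particular contains $\mathcal{U}$) is dense in each Banach step $\mathcal{W}^{l,p}_{\mathcal{U}}$ for every fixed $l\in\mathbb{R}$.

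Next I would carry out the step-level density. Fix $\mu\in\mathcal{W}^{l,p}_{\mathcal{U}}$. By Definition~\ref{def2.1}, its Fourier side $\hat{\mu}$ is a measurable function with
\[
\int_{\mathbb{C}^{n}}(1+|\xi|^{p})^{l}|\hat{\mu}(\xi)|^{p}\,d\xi<\infty.
\]
Choose a smooth radial cutoff $\chi\in\mathcal{D}(\mathbb{C}^{n})$ with $0\le\chi\le 1$ and $\chi\equiv 1$ on a neighbourhood of the origin, and set $\chi_{k}(\xi)=\chi(\xi/k)$. Define $\mu_{k}$ by $\hat{\mu}_{k}=\chi_{k}\hat{\mu}$. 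Then each $\hat{\mu}_{k}$ is compactly supported and bounded, so $\mu_{k}\in\mathcal{U}\subset\mathcal{U}'$, and by dominated convergence applied to the weight $(1+|\xi|^{p})^{l}|\hat{\mu}(\xi)|^{p}$,
\[
\|\mu-\mu_{k}\|_{\mathcal{W}^{l,p}_{\mathcal{U}}}^{p}
=\int_{\mathbb{C}^{n}}(1+|\xi|^{p})^{l}|1-\chi_{k}(\xi)|^{p}|\hat{\mu}(\xi)|^{p}\,d\xi\longrightarrow 0.
\]
This produces a sequence in $\mathcal{U}\subset\mathcal{U}'$ that converges to $\mu$ in the $\mathcal{W}^{l,p}_{\mathcal{U}}$-norm, and by continuity of $j_l$, also in the ambient inductive-limit topology. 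Since $l$ was arbitrary and $\mathcal{W}^{-\infty,p}_{\mathcal{U}}$ is the union of these steps, every element of $\mathcal{W}^{-\infty,p}_{\mathcal{U}}$ is a limit of a sequence inside $\mathcal{U}'$, yielding $\overline{\mathcal{U}'}=\mathcal{W}^{-\infty,p}_{\mathcal{U}}$.

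The main technical obstacle I foresee is the passage from norm-density in each single Banach step to density in the inductive-limit topology. A priori, a neighbourhood basis at $0$ in $\mathcal{W}^{-\infty,p}_{\mathcal{U}}$ is given by absolutely convex sets whose intersection with every $\mathcal{W}^{l,p}_{\mathcal{U}}$ is a neighbourhood there, and convergent sequences need not be Mackey-convergent. However, one only needs sequential density here, and sequential density in a single step implies sequential density in the inductive limit by continuity of the canonical injection; this is precisely where Theorem~\ref{th6.9}(i) and Remark~\ref{rem6.10} are invoked, since they guarantee that the chain of spaces is well-ordered by continuous inclusions. A secondary subtlety is verifying that the cutoff approximants $\mu_{k}$ are genuine tempered ultradistributions in the sense of Definition~\ref{defultra}; this is immediate because $\chi_{k}\hat{\mu}$ is compactly supported and the Paley--Wiener-type growth estimate built into the norm in~(\ref{eqn1}) follows from the compactness of $\mathrm{supp}\,\hat{\mu}_{k}$ together with Theorem~\ref{eqn3.2}.
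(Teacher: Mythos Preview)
Your argument is correct in outline but takes a genuinely different route from the paper's. The paper does not construct approximants by Fourier truncation. Instead it works directly with the defining growth estimates: starting from a sequence $\{(1+|\xi|^{p})^{l}\mu(\xi)\}$ of rapidly decreasing ultradifferentiable functions with $\sup_{\xi}(1+|\xi|^{p})^{l}|\mu(\xi)|<\infty$, it derives the pointwise bound $|\hat{\mu}(\xi)|\le M(1+|\xi|^{p})^{l}|\eta\mu(\xi)|^{p}$ for $\eta\in\mathcal{U}$, records the decay $\lim_{|\xi|\to\infty}(1+|\xi|^{p})^{l}\mu(\xi)=0$, and then bounds
\[
\int_{\Omega}(1+|\xi|^{p})^{l}|\eta\widehat{\mu}(\xi)|^{p}\,d\xi
\le M\int_{\Omega}(1+|\xi|^{p})^{l-|\beta|}\,d\xi<\infty
\quad\text{for }l-|\beta|<-n/2,
\]
which places $\mu$ in some $\mathcal{W}^{l,p}_{\mathcal{U}}$ and hence in $\bigcup_{l}\mathcal{W}^{l,p}_{\mathcal{U}}=\mathcal{W}^{-\infty,p}_{\mathcal{U}}$. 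In other words, the paper argues at the level of the weight inequalities and the integrability threshold $l-|\beta|<-n/2$, never writing down an explicit cutoff.

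Your approach is more constructive and closer to the standard density argument in weighted $L^{p}$: the Fourier multipliers $\chi_{k}$ give concrete approximants, and dominated convergence does the rest. This buys you an explicit rate-free approximation and avoids the integrability threshold entirely. The paper's approach, by contrast, leans on the built-in decay of $\mathcal{U}$ and the finiteness of $\int(1+|\xi|^{p})^{l-|\beta|}d\xi$; it is shorter but less transparent as a density statement. One small overreach on your side: the claim that $\mu_{k}\in\mathcal{U}$ is not justified, since compact Fourier support yields exponential-type entire functions rather than rapid decrease, and $\hat{\mu}_{k}$ need not be bounded (only in $L^{p}$ on its support). However, you only need $\mu_{k}\in\mathcal{U}'$, and this is immediate from $\mu_{k}\in\mathcal{W}^{l,p}_{\mathcal{U}}\subseteq\mathcal{U}'$ via Remark~\ref{rem6.10}, so the argument goes through.
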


\begin{proof}
We show that $\displaystyle \overline{\mathcal{U}'}= \mathcal{W}^{-\infty,p}_{\mathcal{U}}$. Recall that $\displaystyle\mathcal{W}^{-\infty,p}_{\mathcal{U}}=\bigcup_{l\in\mathbb{R}}\mathcal{W}_{\mathcal{U}}^{l,p}$. To do this let $\Big\{(1+\vert\xi\vert^{p})^{l}\mu(\xi)\Big\}$ denotes a countable convergent sequence of rapidly decreasing ultradifferentiable functions $\mu$ satisfying the growth condition such that $\displaystyle\sup_{\xi\in\mathbb{C}}\Big\{ (1+\vert\xi\vert^{p})^{l}\mu(\xi)\Big\}<\infty$. Then it follows that $\displaystyle \vert\hat{\mu}(\xi)\vert\leq M (1+\vert\xi\vert^{p})^{l}\vert \eta\mu(\xi)\vert^{p}<\infty$ for all $\eta\in\mathcal{U}$. Thus it follows that $\displaystyle\lim_{\vert\xi\vert\to\infty} \Big\{ (1+\vert\xi\vert^{p})^{l}\mu(\xi)\Big\}=0$ for $\vert\beta\vert\leq l$. \ \\
Therefore
\[\int_{\Omega}(1+\vert\xi\vert^{p})^{l}\vert \eta\widehat{\mu}(\xi)\vert^{p}d\xi\leq M \int_{\Omega}(1+\vert\xi\vert^{p})^{l-\vert\beta\vert}d\xi<\infty\quad\text{for}\quad \vert\beta\vert\leq l\quad\text{if}\quad l-\vert\beta\vert<-(n/2)\]
Then it shows that $\mu$ belongs to $\mathcal{W}_{\mathcal{U}}^{l,p}$ and $\cup\mathcal{W}_{\mathcal{U}}^{l,p}$.
\end{proof}

\begin{corollary}
\label{cor3.4}
If $\phi\in\mathcal{U}$ then the multiplication operator operator, $T_{\phi}:\mathcal{U}\longrightarrow \mathcal{W}_{\mathcal{U}}^{l,p}$ defined by $T_{\phi}\mu=\phi\mu$ is a bounded linear operator on $\mathcal{W}_{\mathcal{U}}^{l,p}$ for all $l\in \mathbb{R}$.
\end{corollary}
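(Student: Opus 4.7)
The plan is to reduce the claim to a weighted convolution estimate in the Fourier side. Linearity of $T_\phi$ is routine (just the distributive law for pointwise multiplication), so the real content is the boundedness bound $\|\phi\mu\|_{\mathcal{W}^{l,p}_{\mathcal{U}}}\le C_\phi\|\mu\|_{\mathcal{W}^{l,p}_{\mathcal{U}}}$ for every $\mu\in\mathcal{U}$, with $C_\phi$ depending only on $\phi$.

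First I would fix $\phi\in\mathcal{U}$ and $\mu\in\mathcal{U}$, then translate the problem into Fourier variables via $\widehat{\phi\mu}=\hat{\phi}\ast\hat{\mu}$ (which is legitimate since $\mathcal{U}\subset\mathcal{S}$ and the Fourier transform is a topological isomorphism on $\mathcal{U}'$, used already in the proof of Theorem~\ref{thm2.4}). Next I would insert the weight $(1+|\xi|^p)^{l}$ and apply a Peetre-type inequality of the form
\[
(1+|\xi|^p)^{l}\;\le\;C_l\,(1+|\xi-\eta|^p)^{|l|}(1+|\eta|^p)^{l}\qquad(\xi,\eta\in\mathbb{C}^n),
\]
which is the standard Peetre bound adapted to the weight used throughout the paper. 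This separates the integrand into a part that depends only on $\hat{\phi}$ and a part that depends on the weighted $\hat{\mu}$.

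With that separation in place, set $g(\xi):=(1+|\xi|^p)^{|l|}|\hat{\phi}(\xi)|$ and $h(\eta):=(1+|\eta|^p)^{l}|\hat{\mu}(\eta)|$. Because $\phi\in\mathcal{U}$, its Fourier transform decays faster than any polynomial, so $g\in L^1(\mathbb{C}^n)$; on the other hand $\|h\|_{L^p}=\|\mu\|_{\mathcal{W}^{l,p}_{\mathcal{U}}}$. Young's convolution inequality $\|g\ast h\|_{L^p}\le\|g\|_{L^1}\|h\|_{L^p}$ then yields
\[
\|T_\phi\mu\|_{\mathcal{W}^{l,p}_{\mathcal{U}}}\;\le\;C_l\|g\|_{L^1}\|\mu\|_{\mathcal{W}^{l,p}_{\mathcal{U}}},
\]
so the operator norm is bounded by $C_\phi=C_l\|g\|_{L^1}$, which depends continuously on $\phi$ through the defining seminorms of $\mathcal{U}$ (cf.\ Definition~\ref{defultra}). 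Linearity of $T_\phi$ combined with this estimate gives the corollary, and Proposition~\ref{prop3.3} lets us extend $T_\phi$ to all of $\mathcal{W}^{l,p}_{\mathcal{U}}$ by density if desired.

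The main obstacle I expect is the weight-shifting step: one must justify the Peetre inequality for the specific weight $(1+|\xi|^p)^{l}$ (with the Lebesgue exponent $p$ appearing inside the bracket, as in this paper's conventions) and handle the sign of $l$, since $l$ is allowed to be any real number. A secondary technical point is verifying that $\hat{\phi}$ is still rapidly decreasing on $\mathbb{C}^n$ when $\phi\in\mathcal{U}$ — this follows from the containment $\mathcal{U}\subset\mathcal{S}$ noted in Definition~\ref{defultra}, so that the classical Fourier exchange between decay and smoothness is available. Once these two points are in hand, the rest is a bookkeeping application of Young's inequality.
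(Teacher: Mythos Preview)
Your argument is sound and, in fact, considerably more complete than what the paper offers. The paper gives no proof for this corollary at all; the only justification is the unnumbered Remark immediately preceding Proposition~\ref{prop3.3}, which simply asserts that $\phi\mu\in\mathcal{W}^{l,p}_{\mathcal{U}}$ ``since $\sup_{\xi\in\mathbb{C}^{n}}\big((1+|\xi|^{p})^{l}|\phi\mu(\xi)|^{p}\big)<\infty$''. That line neither explains why the supremum is finite, nor how a pointwise supremum controls the integral norm in Definition~\ref{def2.1}, nor why the bound is linear in $\|\mu\|_{\mathcal{W}^{l,p}_{\mathcal{U}}}$; it is essentially a statement of the conclusion rather than a proof.

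Your route is genuinely different: you pass to the Fourier side, turn multiplication into convolution, split the weight via a Peetre-type inequality, and close with Young's inequality. This actually produces an operator-norm bound with an explicit constant $C_{\phi}=C_{l}\|(1+|\cdot|^{p})^{|l|}\hat{\phi}\|_{L^{1}}$ that depends continuously on $\phi$ through the seminorms of $\mathcal{U}$, and it handles all real $l$ uniformly. The paper's one-line remark buys brevity but leaves the analytic content unverified; your argument buys an honest quantitative estimate at the cost of invoking the weight-shifting lemma. The obstacle you flag---checking the Peetre bound for the weight $(1+|\xi|^{p})^{l}$---is real but routine: since $1+|\xi|^{p}$ is comparable to $(1+|\xi|)^{p}$ and $1+|\xi|\le(1+|\xi-\eta|)(1+|\eta|)$, the multiplicative submultiplicativity transfers with constants depending only on $p$ and $|l|$.
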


\begin{definition}
\label{def3.5}
We denote denote by $\overline{\mathcal{W}}_{\mathcal{U}}^{l,p}$ the closure of $\mathcal{D}$ in the $\mathcal{W}_{\mathcal{U}}^{l,p}$-norm for all $l\in\mathbb{R}$.
\end{definition}
Before we present the extension of the Rellich's compactness theorem, the trace theorem is needed for the proof. The importance of the Rellich's theorem establish the fact that under certain conditions the embedding $\mathcal{W}^{l,p}_{\mathcal{U}}\hookrightarrow \mathcal{W}^{t,p}_{\mathcal{U}}$ for $l>t$ is compact. This means that from any bounded sequence in $\mathcal{W}_{\mathcal{U}}^{l,p}$ one can obtain an $\mathcal{W}^{t,p}_{\mathcal{U}}$-convergent subsequence. The elementary result called Bolzano-Weierstrass theorem plays a significant role in this compactness result.

\begin{theorem}
\label{th3.5}
If $\mu\in \overline{\mathcal{W}}_{\mathcal{U}}^{l,p}$ for some $l>n/2$, then $\mu\equiv 0$ on $\partial\Omega$ for $\Omega\subset \mathbb{C}^n$.
\end{theorem}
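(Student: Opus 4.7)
The plan is to approximate $\mu$ by compactly supported test functions, transfer the information about their boundary values (namely, that they vanish identically on $\partial\Omega$) to the limit, and use the hypothesis $l>n/2$ to guarantee that convergence in the $\mathcal{W}_{\mathcal{U}}^{l,p}$-norm is strong enough to pass pointwise to the boundary. Concretely, by Definition \ref{def3.5}, membership $\mu\in\overline{\mathcal{W}}_{\mathcal{U}}^{l,p}$ provides a sequence $\{\phi_k\}\subset\mathcal{D}(\Omega)$ with $\|\phi_k-\mu\|_{\mathcal{W}_{\mathcal{U}}^{l,p}}\to 0$. Each $\phi_k$ has compact support contained in $\Omega$, so $\phi_k(\xi)=0$ for every $\xi\in\partial\Omega$ and in fact in a neighborhood of $\partial\Omega$.

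Next I would invoke the embedding already established in Theorem \ref{th7.21}(i): when $l>n/2$ one has $\mathcal{W}_{\mathcal{U}}^{l,p}\hookrightarrow C_0^{0}(\mathbb{C}^n)$ continuously. Since the embedding is continuous, there is a constant $M$ with
\[
\sup_{\xi\in\mathbb{C}^n}|\phi_k(\xi)-\mu(\xi)|\;\leq\;M\,\|\phi_k-\mu\|_{\mathcal{W}_{\mathcal{U}}^{l,p}}\;\longrightarrow\;0,
\]
so the sequence $\phi_k$ converges uniformly on $\mathbb{C}^n$ to (the continuous representative of) $\mu$. Uniform convergence transfers pointwise values: for any $\xi_0\in\partial\Omega$,
\[
\mu(\xi_0)\;=\;\lim_{k\to\infty}\phi_k(\xi_0)\;=\;0,
\]
since each $\phi_k$ vanishes on $\partial\Omega$. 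This gives $\mu\equiv 0$ on $\partial\Omega$, which is the claim.

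The main obstacle I expect is the step that identifies $\mu\in\overline{\mathcal{W}}_{\mathcal{U}}^{l,p}$ (a priori only a tempered ultradistribution) with a genuinely continuous function whose values at individual boundary points make sense. This is precisely what the embedding $\mathcal{W}_{\mathcal{U}}^{l,p}\hookrightarrow C_0^{0}(\mathbb{C}^n)$ from Theorem \ref{th7.21}(i) supplies, but one should check that the threshold $l>n/2$ is exactly what that theorem requires (the hypothesis $n>2|\beta|$ with $|\beta|\leq l$ needs to be read in the direction $l<n/2$ replaced by $l>n/2$ for the continuous embedding one wants here, using that $(1+|\xi|^p)^{-l}\in L^{p'}$ for this range so that $\hat{\mu}\in L^1$ by H\"older and hence $\mu$ is continuous by Riemann--Lebesgue). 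Once that identification is secured, the rest of the argument is a soft approximation argument and should require no further computation.
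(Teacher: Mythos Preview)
Your proposal is correct and follows essentially the same approach as the paper's proof: take an approximating sequence $\{\mu_m\}\subset\mathcal{D}(\Omega)$ (which vanishes on $\partial\Omega$), invoke the Sobolev embedding of Theorem~\ref{th7.21} for $l>n/2$ to control $\sup|\mu-\mu_m|$ by $\|\mu-\mu_m\|_{\mathcal{W}_{\mathcal{U}}^{l,p}}$, and let $m\to\infty$. The only cosmetic difference is that the paper writes the sup-norm estimate with the weight $(1+|\xi|^p)^l$ attached, while you use the plain uniform norm; the logic is identical.
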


\begin{proof}
Let $l>n/2$ and $\mu\in \overline{\mathcal{W}}_{\mathcal{U}}^{l,p}$ be given, then we have a convergent sequence $\lbrace\mu_{m}\rbrace$ of functions in $\mathcal{D}(\Omega)$ with compact support since $\mathcal{D}\subset\mathcal{U}\subset\mathcal{S}$ such that $\mu_m\longrightarrow m$ in $\mathcal{W}_{\mathcal{U}}^{l,p}$.  By Theorem \ref{th7.21}, we get
\[\sup_{\xi\in\partial\Omega}\Big\vert(1+\vert\xi\vert^{p})^{l}\mu(\xi)\Big\vert=\sup_{\xi\in\partial\Omega}\Big\vert(1+\vert\xi\vert^{p})^{l}(\mu-\mu_{m})(\xi)\Big\vert\leq  M_{l}\Vert\mu_{m}-\mu\Vert_{l}\quad\text{for}\quad m=2,\cdots,\]
Since $\displaystyle \Vert\mu_{m}-\mu\Vert$ is non-negative, it implies that the equality occurs when $\mu \equiv 0$ on $\partial\Omega$.
\end{proof}

\begin{remark}
Theorem \ref{th3.5} entails that function $\mu$ with slow growth condition vanishes at infinity on the boundary of the open subset. \ \\
Also from Theorem \ref{th3.5}, we characterize the trace of a Sobolev function $\mu\in \mathcal{W}^{1,p}_{\mathcal{U}}$ namely, the ``restriction`` of $\mu$ to the boundary $\partial\Omega$.
\end{remark}

\begin{theorem}
\label{th3.6}
Let $\Omega\subset \mathbb{C}^{n}$ be a bounded open subset, and let $l<l'$ be given. Then there exists a convergent subsequence in $\mathcal{W}_{\mathcal{U}}^{l,p}$ for every bounded sequence of tempered ultradistributions in $\overline{\mathcal{W}}_{\mathcal{U}}^{l',p}(\Omega)$. In particular, the inclusion map $\displaystyle\overline{\mathcal{W}}_{\mathcal{U}}^{l',p}(\Omega)\longrightarrow \mathcal{W}_{\mathcal{U}}^{l,p}$ is compact.
\end{theorem}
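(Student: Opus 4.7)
The plan is to run the classical Rellich argument on the Fourier side, exploiting the fact that the $\mathcal{W}_{\mathcal{U}}^{l,p}$-norm is a weighted $L^p$-norm on $\hat\mu$ and that membership in $\overline{\mathcal{W}}_{\mathcal{U}}^{l',p}(\Omega)$ effectively localises each $\mu_m$ in the bounded region $\Omega$, forcing $\hat\mu_m$ to be entire of Paley--Wiener type with uniformly controlled exponential growth. Given a bounded sequence $\{\mu_m\}$ with $\|\mu_m\|_{\mathcal{W}_{\mathcal{U}}^{l',p}}\leq C$, I would split the target norm into an inner piece over $\{|\xi|\leq R\}$ and a tail over $\{|\xi|>R\}$, and treat the two contributions by separate mechanisms.

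The tail is controlled purely by the gap $l<l'$: the factorisation $(1+|\xi|^p)^l=(1+|\xi|^p)^{l-l'}(1+|\xi|^p)^{l'}$ gives
$$\int_{|\xi|>R}(1+|\xi|^p)^l|\hat\mu_m(\xi)|^p\,d\xi\;\leq\;(1+R^p)^{l-l'}\,\|\mu_m\|_{\mathcal{W}_{\mathcal{U}}^{l',p}}^p\;\leq\;(1+R^p)^{l-l'}\,C^p,$$
which tends to $0$ as $R\to\infty$ uniformly in $m$, since $l-l'<0$. This is the only point where the strict inequality is used, and it is precisely the source of the compactness gain.

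For the inner piece I would exploit that each $\mu_m$ lies in the $\mathcal{D}(\Omega)$-closure, so a Paley--Wiener argument yields that the $\hat\mu_m$ extend to entire functions of exponential type bounded by the diameter of $\Omega$, with Cauchy estimates providing uniform bounds on their complex gradients on every compact subset of $\mathbb{C}^n$. Hence $\{\hat\mu_m\}$ is uniformly bounded and equicontinuous on $\{|\xi|\leq R\}$, and by Arzel\`a--Ascoli (equivalently Montel's theorem applied to the entire extensions) I extract a subsequence converging uniformly there. A standard diagonal extraction over an exhausting sequence $R_k\to\infty$ produces a single subsequence $\{\mu_{m_k}\}$ whose Fourier transforms converge uniformly on every compact set.

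The two estimates combine in the usual way: given $\varepsilon>0$, pick $R$ so that the tail of every $\mu_{m_k}-\mu_{m_{k'}}$ is below $\varepsilon/2$, then use the uniform convergence on $\{|\xi|\leq R\}$ to force the inner piece below $\varepsilon/2$ for $k,k'$ large. This makes $\{\mu_{m_k}\}$ Cauchy, hence convergent in $\mathcal{W}_{\mathcal{U}}^{l,p}$ by the completeness proved in Theorem \ref{thm2.4}, and the inclusion map is therefore compact. The main obstacle will be rigorously transferring the Paley--Wiener bounds from the $\mathcal{D}(\Omega)$-approximants to their $\mathcal{W}^{l',p}_{\mathcal{U}}$-limits, i.e.\ ensuring that the uniform exponential type and the local equicontinuity of the Fourier transforms pass cleanly to the limit; without this localisation via the bounded domain $\Omega$, the argument collapses and the inclusion fails to be compact, exactly as on the whole space.
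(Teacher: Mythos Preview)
Your proposal is correct and follows the same overall architecture as the paper: split the $\mathcal{W}_{\mathcal{U}}^{l,p}$-norm on the Fourier side into an inner region $\{|\xi|\le R\}$ and a tail $\{|\xi|>R\}$, kill the tail uniformly via the factorisation $(1+|\xi|^p)^l=(1+|\xi|^p)^{l-l'}(1+|\xi|^p)^{l'}$ with $l-l'<0$, and handle the inner region by Arzel\`a--Ascoli after establishing equicontinuity of $\{\hat\mu_m\}$ on compacta; then combine to get a Cauchy subsequence.

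The one genuine difference is in how equicontinuity is obtained. The paper does not invoke Paley--Wiener or Cauchy estimates; instead it fixes a local unit $\chi\in\mathcal{D}$ with $\chi\equiv 1$ on $\bar\Omega$, writes $\hat\mu_m=\hat\chi*\hat\mu_m$, and applies H\"older's inequality to the convolution to obtain uniform pointwise bounds on $(1+|\xi|^p)^{l'}|\hat\mu_m(\xi)|$ and on $(1+|\xi|^p)^{l'}|\partial_i\hat\mu_m(\xi)|$ directly in terms of $\|\mu_m\|_{l'}$ and weighted $L^p$-norms of $\hat\chi$. Equicontinuity then follows from the mean value theorem and uniform continuity of the polynomial weight. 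Your Paley--Wiener/Montel route reaches the same conclusion and is arguably cleaner conceptually, but it carries exactly the burden you flagged at the end: one must check that the exponential-type bounds and the local gradient control survive passage from the $\mathcal{D}(\Omega)$-approximants to their $\mathcal{W}_{\mathcal{U}}^{l',p}$-limits. The paper's cutoff-and-convolve device sidesteps that issue entirely, since the identity $\mu_m=\chi\mu_m$ and the resulting convolution bound hold for any element of the closure without needing the approximants explicitly.
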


\begin{proof}
First we show that both sequences of rapidly decreasing ultradifferentiable functions and its partial derivatives defining tempered ultradistribtions is bounded. To do this, let $\lbrace\mu_{m}\rbrace$ be a bounded sequence in $\overline{\mathcal{W}}_{\mathcal{U}}^{l',p}(\Omega)$ and let $\chi$ be a local unit for $\bar{\Omega}$. Then we have $\vert\beta\vert\leq l'$ and $\vert\beta\vert\leq l$,
\begin{align*}
(1+\vert\xi\vert^{p})^{l'}\vert\hat{\mu}_{m}(\xi)\vert & \leq \int_{\Omega}(1+\vert\xi\vert^{p})^{l'-l}  \vert\hat{\chi}(\xi)\vert (1+\vert\xi\vert^{p})^{l}\vert\hat{\mu}_{m}(\xi)\vert d\xi \\
     &\leq \Bigg[\int_{\Omega}(1+\vert\xi\vert^{p})^{l'-l}  \vert\hat{\chi}(\xi)\vert^{p}d\xi\Bigg]^{1/p}   \Bigg[\int_{\Omega}(1+\vert\xi\vert^{p})^{l}  \vert\hat{\mu}_{m}(\xi)\vert^{p} d\xi \Bigg]^{1/p}\\
     &\leq \Vert\mu_{m}\Vert_{l}\Bigg[\int_{\Omega}(1+\vert\xi\vert^{p})^{l'-l}\vert\hat{\chi}(\xi)\vert^{p}d\xi\Bigg]^{1/p} \\
     &\leq \Vert\mu_{m}\Vert_{l}\sup_{\xi\in\Omega}\Big\{(1+\vert\xi\vert^{p})^{l'-l} \vert\hat{\chi}(\xi)\vert^{p}\Big\}\\
     & \leq M_{1}<\infty.
\end{align*}
for $M_{1}>0$ (not dependent on $\xi$ and $m$).\\
We also prove, in a similar way, the boundedness of the sequence of partial derivatives of tempered ultradistributions. Whence,
\begin{align*}
(1+\vert\xi\vert^{p})^{l'}\vert\partial_{i}^{\beta}\hat{\mu}_{m}(\xi)\vert & \leq \int_{\Omega}(1+\vert\xi\vert^{p})^{l'-l}  \vert\partial^{\beta}\hat{\chi}(\xi)\vert (1+\vert\xi\vert^{p})^{l}\vert\partial^{\beta}\hat{\mu}_{m}(\xi)\vert d\xi \\
     &\leq \Bigg[ \int_{\Omega}\partial^{\beta}(1+\vert\xi\vert^{p})^{l'-l}  \vert\hat{\chi}(\xi)\vert^{p}d\xi\Bigg]^{1/p}   \Bigg[\int_{\Omega}\partial^{\beta}(1+\vert\xi\vert^{p})^{l}  \vert\hat{\mu}_{m}(\xi)\vert^{p} d\xi \Bigg]^{1/p}\\
     &\leq \Vert\mu_{m}\Vert_{l}\vert\xi\vert^{p}\Bigg[\int_{\Omega}(1+\vert\xi\vert^{p})^{l'-l}  \vert\partial^{\beta}\hat{\chi}(\xi)\vert^{p}d\xi\Bigg]^{1/p} \\
     &\leq \Vert\mu_{m}\Vert_{l}\sup_{\xi\in\Omega}\Bigg\{(1+\vert\xi\vert^{p})^{l'-l}\vert\chi\vert^{\beta} \vert\hat{\chi}(\xi)\vert^{p}\Bigg\}\\
     & \leq M_{2}<\infty.
     \end{align*}
Next we show that $(1+\vert\xi\vert^{p})^{\beta}\hat{\mu}_{m}(\xi)$ is an equicontinuous sequence of rapidly decreasing ultradifferentiable functions on compact set. To do this, let $K$ be any compact subset of $\mathbb{C}^n$ and $\displaystyle  M_{3}=\sup_{\xi\in\mathbb{C}}\Bigg\{(1+\vert\xi\vert^{p})^{\beta}: \xi\in \overline{B(0,T)}\Bigg\}$ where $\displaystyle T=\sup_{\xi\in K}\Vert\xi\Vert$. Then $\displaystyle \vert\hat{\mu}_{m}\vert\leq M_{1}(1+\vert\xi\vert^{p})^{-l}\leq M_{2}M_{3}$ and $\displaystyle \vert\partial^{\beta}\hat{\mu}_{m}\vert\leq M_{2}M_{3}$ for every $m>0$ and $\xi$ in $K$, $\vert\beta\vert\leq l$. Due to the uniform continuity nature of polynomials $(1+\vert\xi\vert^{p})^{l}$ on $K$, then for every $\varepsilon>0$, constant $\delta>0$, we have that
\[\Big\vert (1+\vert\xi\vert^{p})^{l}-(1+\vert\eta\vert^{p})^{l} \Big\vert<\frac{\varepsilon}{2M_{1}M_{3}}\quad\text{whenever}\quad \vert\xi-\eta\vert<\delta_{1}\]
Then from mean value theorem, we get
\begin{align*}
\Big\vert\hat{\mu}_{m}(\xi)-\hat{\mu}_{m}(\eta)\Big\vert &\leq \sum_{i\in\mathbb{N}}\Big\vert\xi_{i}-\eta_{i}\Big\vert^{p}\Big\vert\partial_{i}^{\beta}\hat{\mu}_{m}(\xi-\eta)\Big\vert\\
    &\leq M_{4}\Vert\xi-\eta\Vert
    \end{align*}
for some constant $M_4$, $m\in\mathbb{N}$ and $\xi,\eta\in K$.\\
Also, for every $\varepsilon>0$ there is a constant $\delta_{2}>0$ such that $\displaystyle \Big\vert\hat{\mu}_{m}(\xi)-\hat{\mu}_{m}(\eta)\Big\vert<\frac{\varepsilon}{2M_{3}M_{4}}$ whenever $\vert\xi-\eta\vert\leq \delta_{2}$. Now choose $\delta=\min\lbrace\delta_{1},\delta_{2}\rbrace$, we get
\begin{align*}
\Big\vert(1+\vert\xi\vert^{p})^{l}\hat{\mu}_{m}(\xi)-(1+\vert\eta\vert^{p})^{l}\hat{\mu}_{m}(\eta)\Big\vert &\leq (1+\vert\xi\vert^{p})^{l} \Big\vert\hat{\mu}_{m}(\xi)-\hat{\mu}_{m}(\eta)\Big\vert\\
    &+ \Big\vert(1+\vert\xi\vert^{p})^{l}-(1+\vert\eta\vert^{p})^{l}\Big\vert\Big\vert\hat{\mu}(\eta)\Big\vert\\
    &<\varepsilon
    \end{align*}
 provided $\vert\xi-\eta\vert<\delta$ and $\xi,\eta\in K$ which established the uniform continuity of $(1+\vert\xi\vert^{p})^{l}$.\\
 This shows that $\displaystyle \Big\{ (1+\vert\xi\vert^{p})^{l}\hat{\mu}_{m}\Big\}$ is a pointwise bounded sequence in $\overline{W}_{\mathcal{U}}^{k,p}$ on $K$. The convergent subsequence $\displaystyle \Big\{ (1+\vert\xi\vert^{p})^{\beta}\hat{\mu}_{m_{i}}\Big\}$ by Arzela-Ascoli theorem  and Bolzano-Weierstrass theorem  exists uniformly on each compact sets in $\mathbb{C}^n$. \\
For every $T>0$, we have
\begin{align}
\label{eqn7.1}
\Vert\mu_{m_i}-\mu_{m_j}\Vert &=\int_{\Omega}(1+\vert\xi\vert^{p})^{2l}\vert(\mu_{m_i}-\mu_{m_j})(\xi)\vert^{p}d\xi\nonumber \\
        &=\int_{\Omega}(1+\vert\xi\vert^{p})^{2l}(1+\vert\xi\vert^{p})^{2l-2k}\vert(\mu_{m_i}-\mu_{m_j})(\xi)\vert^{p}d\xi\nonumber \\
        & \leq
        \int_{\vert\xi\vert\geq T}(1+\vert\xi\vert^{p})^{2l-2k}\vert(\mu_{m_i}-\mu_{m_j})(\xi)\vert^{p}d\xi\nonumber\\
        &+\int_{\vert\xi\vert\leq T}(1+\vert\xi\vert^{p})^{2k}\vert(\mu_{m_i}-\mu_{m_j})(\xi)\vert^{p}d\xi\nonumber\\
       &\leq \int_{\vert\xi\vert\leq T}(1+\vert\xi\vert^{p})^{2k}\vert(\mu_{m_i}-\mu_{m_j})(\xi)\vert^{p}d\xi+M
\end{align}
The inequality of (\ref{eqn7.1}) follows from the boundedness of $\lbrace\mu_{m}\rbrace$ in $\overline{\mathcal{W}}_{\mathcal{U}}^{l,p}$. Now given $\varepsilon>0$, we can take a large $T$ so that due to the uniform convergence on $B(0,T)$ of the subsequence $\displaystyle (1+\vert\xi\vert^{p})^{l}\hat{\mu}_{m_j}(\xi)$ then given a constant such that $m_{i},M_{j}\geq M$ for $i<j$ we have
\[\int_{\vert\xi\vert\leq T}(1+\vert\xi\vert^{p})^{l}\vert\mu_{m_i}-\mu_{m_j}\vert^{p}d\xi<\frac{\varepsilon^p}{2}+\frac{\varepsilon}{2}<\varepsilon\]
Hence $\lbrace\mu_{j}\rbrace$ is a convergent subsequence of $\mu_m$ in $W_{\mathcal{U}}^{t,p}$.
\end{proof}
\begin{remark}
If $l=t$, then Theorem \ref{th3.6} will not hold since their orders of derivatives are same.
\end{remark}

\begin{corollary}
\label{corembedding}
Let $\Omega\subset\mathbb{C}$ be a bounded subset and $l<t<t'$. Then we have the following continuous embedding
\begin{equation}
\label{eqnembedding}
\mathcal{U}\hookrightarrow \mathcal{W}^{t',p}_{\mathcal{U}}\hookrightarrow\mathcal{W}_{\mathcal{U}}^{t,p}\hookrightarrow\mathcal{W}_{\mathcal{U}}^{l,p}\hookrightarrow\mathcal{U}'
\end{equation}
\end{corollary}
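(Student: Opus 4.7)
The plan is to treat the chain as four separate continuous embeddings and verify each in turn. I would dispose of the two middle inclusions $\mathcal{W}^{t',p}_{\mathcal{U}} \hookrightarrow \mathcal{W}^{t,p}_{\mathcal{U}}$ and $\mathcal{W}^{t,p}_{\mathcal{U}} \hookrightarrow \mathcal{W}^{l,p}_{\mathcal{U}}$ first, because they follow immediately from Theorem \ref{th6.9}(i) applied with the pairs $(t',t)$ and $(t,l)$ respectively. In each case the pointwise estimate $(1+\vert\xi\vert^{p})^{a-b}\leq 1$ for $a<b$ on the Fourier side yields the norm inequality and hence continuity; composition then handles the middle stretch of the chain.

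For the outer embedding $\mathcal{U} \hookrightarrow \mathcal{W}^{t',p}_{\mathcal{U}}$ the approach is to exploit rapid decrease. Given $\mu \in \mathcal{U}$, its Fourier transform $\hat{\mu}$ again lies in $\mathcal{U}$, so Definition \ref{defultra} together with (\ref{eqn1}) furnishes, for any prescribed $N\in\mathbb{N}$, a pointwise bound
\[\vert\hat{\mu}(\xi)\vert^{p}\leq C_{N}\,(1+\vert\xi\vert^{p})^{-N},\]
with $C_{N}$ controlled by a seminorm of $\hat{\mu}$ in $\mathcal{U}$. Choosing $N$ large enough relative to $t'$ that $(1+\vert\xi\vert^{p})^{t'-N}$ is integrable on $\mathbb{C}^{n}$ produces a finite value of $\Vert\mu\Vert_{\mathcal{W}^{t',p}_{\mathcal{U}}}$ dominated by a $\mathcal{U}$-seminorm of $\mu$, which is precisely the continuity statement.

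For the outermost inclusion $\mathcal{W}^{l,p}_{\mathcal{U}} \hookrightarrow \mathcal{U}'$ the set-theoretic containment is already baked into Definition \ref{def2.1}. To upgrade it to a continuous embedding into the strong dual topology I would invoke the H\"older-type pairing displayed just after (\ref{eqn2.3}): for any $\varphi \in \mathcal{U}$,
\[\vert\langle\mu,\varphi\rangle\vert\leq \Vert\mu\Vert_{\mathcal{W}^{l,p}_{\mathcal{U}}}\,\Vert\varphi\Vert_{\mathcal{W}^{-l,p'}_{\mathcal{U}}},\qquad \tfrac{1}{p}+\tfrac{1}{p'}=1,\]
and then estimate $\Vert\varphi\Vert_{\mathcal{W}^{-l,p'}_{\mathcal{U}}}$ by a $\mathcal{U}$-seminorm of $\varphi$ by repeating the rapid-decrease argument of the previous paragraph with the exponent $-l$ in place of $t'$. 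Composing the four continuous inclusions then produces the displayed chain. I expect this last step to be the main obstacle, since the two topologies being matched are of dissimilar character --- a weighted $L^{p}$-norm on the Fourier side versus a dual-of-seminorms topology inherited from $\mathcal{U}$ --- so care is needed to ensure the bounds assemble without implicitly invoking density hypotheses that are not established earlier in the paper.
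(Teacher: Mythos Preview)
Your decomposition matches the paper's: both treat the three inner inclusions as already settled (the paper simply declares them ``obvious'', citing nothing explicitly, whereas you correctly point to Theorem~\ref{th6.9}(i) for the middle two and to rapid decrease for $\mathcal{U}\hookrightarrow\mathcal{W}^{t',p}_{\mathcal{U}}$), and both isolate $\mathcal{W}^{l,p}_{\mathcal{U}}\hookrightarrow\mathcal{U}'$ as the only step requiring argument.

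Where you diverge is in how that last embedding is established. The paper argues by bounding the weighted Fourier transform pointwise,
\[
(1+\vert\xi\vert^{p})^{l}\vert\hat{\mu}(\xi)\vert\;\leq\;\Vert\mu\Vert_{\mathcal{W}^{l,p}_{\mathcal{U}}},
\]
i.e.\ it verifies directly that $\mu$ satisfies the growth/sup condition characterizing elements of $\mathcal{U}'$. You instead work through the duality pairing~(\ref{eqn2.3}) and H\"older's inequality, controlling $\vert\langle\mu,\varphi\rangle\vert$ by $\Vert\mu\Vert_{\mathcal{W}^{l,p}_{\mathcal{U}}}$ times a $\mathcal{U}$-seminorm of $\varphi$. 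Your route is the more transparent one for establishing \emph{continuity} into the strong dual topology, since it produces exactly the kind of estimate that topology demands; the paper's sup-bound is shorter but leaves the passage from a pointwise weighted estimate to continuity in $\mathcal{U}'$ implicit. Both approaches are sound in spirit, and your caveat about matching the weighted $L^{p}$ structure to the dual-of-seminorms topology is well placed --- that is precisely the point the paper glosses over.
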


\begin{proof}
There is need to show that $\displaystyle \mathcal{W}_{\mathcal{U}}^{l,p}\hookrightarrow\mathcal{U}'$ since other inclusions are obvious. To see this, let $\mu\in\mathcal{W}_{\mathcal{U}}^{l,p}$ such that $\displaystyle \sup_{\xi\in\mathbb{C}}\Big\{ (1+\vert\xi\vert^{p})^{l}\vert\mu(\xi)\vert\Big\}$ is finite for $\vert\beta\vert\leq l$ and $\displaystyle \sup_{\xi\in\mathbb{C}}\Big\{ (1+\vert\xi\vert^{p})^{l}\partial^{\beta}\mu(\xi)\Big\}<\infty$. \\
Whence
\begin{align*}
(1+\vert\xi\vert^{p})^{l}\vert\hat{\mu}(\xi)\vert  & \leq \Bigg[\int_{\Omega}(1+\vert\xi\vert^{p})^{\vert\beta\vert}\vert\hat{\mu}(\xi)\vert^{p}d\xi\Bigg]^{\frac{1}{p}}\\
     & \leq \sup_{\xi\in\mathbb{C}} \Bigg[\int_{\Omega}(1+\vert\xi\vert^{p})^{l}\vert\hat{\mu}(\xi)\vert^{p}d\xi\Bigg]^{\frac{1}{p}}\\
     &   \leq \Vert\mu\Vert_{\mathcal{W}_{\mathcal{U}}^{l,p}}
\end{align*}
as required.
\end{proof}
%%=============================================%%
%% For submissions to Nature Portfolio Journals %%
%% please use the heading ``Extended Data''.   %%
%%=============================================%%

%%=============================================================%%
%% Sample for another appendix section			       %%
%%=============================================================%%

%% \section{Example of another appendix section}\label{secA2}%
%% Appendices may be used for helpful, supporting or essential material that would otherwise
%% clutter, break up or be distracting to the text. Appendices can consist of sections, figures,
%% tables and equations etc.

%%===========================================================================================%%
%% If you are submitting to one of the Nature Portfolio journals, using the eJP submission   %%
%% system, please include the references within the manuscript file itself. You may do this  %%
%% by copying the reference list from your .bbl file, paste it into the main manuscript .tex %%
%% file, and delete the associated \verb+\bibliography+ commands.                            %%
%%===========================================================================================%%
\textbf{Conflict of Interest}: The authors demonstrated no conflict of interest towards the publication of this manuscript.

{}

\end{document}